\documentclass[12pt]{article}            

\setlength{\oddsidemargin}{0.0in}
\setlength{\evensidemargin}{0.0in} \setlength{\textheight}{8.75in}
\setlength{\textwidth}{6.5in} \setlength{\topmargin}{-0.5in}

\usepackage{amsmath,amssymb,latexsym, amsbsy, amsfonts, amscd, amsthm, mathrsfs, xy,comment, xcolor}
\usepackage{multirow}
\usepackage{hyperref}
\usepackage{enumitem, empheq, url}
 \usepackage[bibtex-style]{amsrefs}
\xyoption{all}
\usepackage{tikz-cd}
\usepackage{rotating, scalerel}

\theoremstyle{plain}

\newtheorem{theorem}{Theorem}[section]

\newtheorem{prop}[theorem]{Proposition}
\newtheorem{corollary}[theorem]{Corollary}

\newtheorem{lemma}[theorem]{Lemma}

\theoremstyle{definition}
\newtheorem{definition}[theorem]{Definition}

\newtheorem{example}[theorem]{Example}

\long\def\symbolfootnote[#1]#2{\begingroup
\def\thefootnote{\fnsymbol{footnote}}\footnote[#1]{#2}\endgroup}

\def\lra{\longrightarrow}

\DeclareMathOperator{\GL}{GL}

\def\PP{{\mathbf P}}

\def\cG{\mathcal{G}}

\def\cA{\mathcal{A}}

\def\1{\mf{1}}

\DeclareMathOperator{\sms}{ss}

\DeclareMathOperator{\Fitt}{Fitt}
\DeclareMathOperator{\Ann}{Ann}

\DeclareMathOperator{\Frac}{Frac}

\DeclareMathOperator{\Hom}{Hom}

\DeclareMathOperator{\tr}{tr}

\DeclareMathOperator{\im}{im}

\newcommand{\mat}[4]{\begin{pmatrix} {#1} & {#2} \\ {#3} & {#4} \end{pmatrix}}

\newcommand{\mf}{\mathfrak }

\def\fm{\mathfrak{m}}

\def\fm{\mathfrak{m}}

\def\T{\mathbf{T}}
\def\Z{\mathbf{Z}}

\def\Q{\mathbf{Q}}

\def\G{\mathbf{G}}
\def\B{\mathbf{B}}

\def\bdf{\begin{defn}}
\def\edf{\end{defn}}

\def\cO{\mathcal{O}}

\def\cA{\mathcal{A}}

\def\cG{\mathcal{G}}

\def\Gal{{\rm Gal}}

\def\ab{{\rm ab}}

\def\ab{\text{ab}}

\begin{document}
\baselineskip 15.8pt

\title{On Constructing Extensions of Residually Isomorphic Characters}
\author{Samit Dasgupta\footnote{The author is supported by NSF grant DMS--2200787.}}
\maketitle

\begin{abstract}
This is an exposition of our joint work with Kakde, Silliman, and Wang, in which we prove a version of Ribet's Lemma for $\GL_2$ in the residually indistinguishable case.  We suppose we are given a Galois representation taking values in the total ring of fractions of a complete reduced Noetherian local ring 
$\T$, such that the characteristic polynomial of the representation is reducible modulo some ideal $I \subset \T$.  We assume that the two characters that arise are congruent modulo the maximal ideal of $\T$.  We construct an associated Galois cohomology class valued in a $\T$-module that is ``large'' in the sense that its Fitting ideal is contained in $I$.  We make some simplifying assumptions that streamline the exposition---we assume the two characters are actually equal, and we ignore the local conditions needed in arithmetic applications.
\end{abstract}

\tableofcontents

\bigskip

\section{Introduction}

In 1970, Ribet proved the converse of Herbrand's Theorem \cite{ribet}.  When I learned about this in graduate school, Dick Gross recalled to me that the methods introduced by Ribet ``were like a thunderbolt'' in the number theory community at the time.  Over 50 years later, Ribet's method remains a central force in algebraic number theory, particularly in Iwasawa Theory.  Perhaps the seminal work on the topic is the beautiful book of Joel Bella\"iche and Gaetan Chenevier \cite{bc}.  
This book presents a very general form of Ribet's approach and also describes deep arithmetic applications.
An introduction written for a more general audience is given by Mazur \cite{mazur}.

The  goal of Ribet's method is to construct a nontrivial Galois cohomology class from the knowledge that an $L$-function is appropriately divisible.
Typically, we will be given a special value of an $L$-function, which we denote by $L$, lying in a ring $\T$ (e.g.\ $\T = \Z_p$ or $\T = \Z_p[[T]]$), and we assume assume that $L$ lies in some ideal $I \subset \T$.
The value $L$ will be associated to two (or more) representations of the Galois group of a number field $F$ over $R$, say $\rho_1$ and $\rho_2$.  One then wants to construct a nontrivial class in 
\[ H^1(G_F, \overline{\rho}_1 \otimes \overline{\rho}_2^*), \]
where $\rho_2^*$ is the dual of $\rho_2$ and $\overline{\rho}_i$ denotes the reduction of ${\rho}_i$ modulo $I$.  In Ribet's original setting, he had $\T = \Z_p$, $I = (p)$, and $\rho_1, \rho_2$ one dimensional characters of $G_\Q$.  Specifically, $\rho_1$ was the trivial character and $\rho_2$ a nontrivial character of $\Gal(\Q(\mu_p)/\Q)$.

To produce  an extension, Ribet constructed a cusp form congruent to the Eisenstein series associated to $\rho_1$ and $\rho_2$.  In his case, one can show that the cusp form is an eigenform. One therefore obtains a representation \[ \rho\colon G_\Q \lra \GL_2(K), \]
where $K$ is a finite extension of $\Q_p$, such that 
\begin{equation} \label{e:rhodef}
 \tr(\rho) \equiv \rho_1 + \rho_2, \qquad \det(\rho) \equiv \rho_1\rho_2 \pmod{I}. \end{equation}
The representation $\rho$ can be conjugated to land in $\GL_2(\cO_K)$,
and the
Brauer--Nesbitt Theorem implies that one can choose a basis so that the reduction of $\rho$ modulo the maximal ideal $\fm_K \subset \cO_K$ has the form
\[ \overline{\rho} = \mat{\overline{\rho}_1}{*}{0}{\overline{\rho}_2}. \]
Ribet then proves an important lemma which shows that the basis can be chosen so that the image of $*$ in $\cO_K/\fm_K$ defines a non-trivial class in $H^1(G_F, \overline{\rho}_1 \otimes \overline{\rho}_2^{-1})$.  Furthermore, Ribet proves certain local conditions satisfied by this non-trivial class.  It is elementary class field theory to prove that the existence of this class implies the converse to Herbrand's Theorem.
We state Ribet's Lemma formally as follows.

\begin{theorem}[Ribet's Lemma, Version 1] \label{t:v1}
Let $\T$ be
 a complete DVR,
and let $\fm$ denote its maximal ideal.
Let $\cG$ be a compact group.
Suppose we are given a continuous irreducible 
representation
\[ \rho\colon \cG \longrightarrow \GL_2(K), \qquad K = \Frac(\T), \]
such that \begin{equation} \label{e:rhocong}
 \text{char}(\rho(g)) \equiv (x - \chi_1
(g))(x - \chi_2(g)) \pmod{\mathfrak{m}} 
\end{equation}
for two characters $\chi_1, \chi_2 \colon \cG \lra \T^*.$
Then there exists a non-zero cohomology class 
\[ \kappa \in H^1(\cG, \mathbf{\T/\mathfrak{m}}(\chi_2^{-1}\chi_1)). \]
\end{theorem}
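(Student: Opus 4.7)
The plan is to follow Ribet's classical lattice game. Since $\cG$ is compact and $\T$ is a complete DVR, the image $\rho(\cG)$ stabilizes some $\T$-lattice in $K^2$, so after conjugation I may assume $\rho\colon \cG \to \GL_2(\T)$ with reduction $\bar\rho\colon \cG \to \GL_2(\T/\fm)$. By hypothesis and the Brauer--Nesbitt theorem, the semisimplification of $\bar\rho$ is $\bar\chi_1 \oplus \bar\chi_2$; in particular $\bar\rho$ is reducible, and using surjectivity of $\GL_2(\T) \twoheadrightarrow \GL_2(\T/\fm)$ I may further conjugate $\rho$ in $\GL_2(\T)$ so that $\bar\rho$ is upper-triangular:
\[ \rho(g) = \begin{pmatrix} a(g) & b(g) \\ c(g) & d(g) \end{pmatrix}, \qquad a \equiv \bar\chi_1,\ d \equiv \bar\chi_2 \pmod{\fm},\ c(g) \in \fm \text{ for all } g. \]

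Expanding $\bar\rho(gh) = \bar\rho(g)\bar\rho(h)$ yields $\bar b(gh) = \bar\chi_1(g)\bar b(h) + \bar b(g)\bar\chi_2(h)$, so dividing by $\bar\chi_2$ gives a $1$-cocycle representing a class $\kappa \in H^1(\cG, \T/\fm(\chi_2^{-1}\chi_1))$. This $\kappa$ vanishes iff $\bar\rho$ is a split extension, iff there exists $\bar\alpha \in \T/\fm$ with $\bar b(g) = \bar\alpha(\bar a(g) - \bar d(g))$ for every $g$; lifting $\bar\alpha$ to $\alpha \in \T$ and conjugating by $\begin{pmatrix}1 & \alpha \\ 0 & 1\end{pmatrix}$ preserves $c(g) \in \fm$ (the lower-left entry is unchanged under upper-triangular conjugation) and forces $b(g) \in \fm$.

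The main obstacle is to ensure that $\kappa$ is nonzero for some choice of lattice. Assuming $\kappa = 0$ for $L$ (so $b \in \fm$ after the conjugation above), pass to $L_1 = \T e_1 \oplus \pi^{-1}\T e_2$ with $\pi$ a uniformizer; $L_1$ is $\cG$-stable precisely because $b \in \fm$, and in its rescaled basis $\rho(g) = \begin{pmatrix} a & \pi^{-1} b \\ \pi c & d \end{pmatrix}$. The new off-diagonal $\overline{\pi^{-1}b}$ represents a class $\kappa_1$ in the same cohomology group; if $\kappa_1$ again vanishes I find $\beta \in \T$ such that conjugating the original $\rho$ by $\begin{pmatrix}1 & \pi\beta \\ 0 & 1\end{pmatrix}$ pushes $b$ into $\fm^2$. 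Iterating $N$ times pushes $b$ into $\fm^N$, and the cumulative conjugating matrices---of the form $\begin{pmatrix}1 & \alpha + \pi\beta_1 + \pi^2\beta_2 + \cdots \\ 0 & 1\end{pmatrix}$---form a Cauchy sequence in $\GL_2(\T)$ that converges by completeness of $\T$. If the process never terminates, this limit conjugation sends $b$ to $0$ identically while keeping $c \in \fm$, so $Ke_2$ becomes a $\cG$-stable line, contradicting the irreducibility of $\rho$. Hence some finite stage produces the desired nonzero class in $H^1(\cG, \T/\fm(\chi_2^{-1}\chi_1))$.
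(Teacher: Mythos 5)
Your proposal is correct and follows essentially the same argument as the paper: after conjugating $\rho$ into $\GL_2(\T)$ with upper-triangular reduction, you iteratively use triviality of the cocycle to conjugate $b$ into higher powers of $\fm$, and if the process never terminates, the limiting unipotent conjugation (convergent by completeness) makes $b \equiv 0$, contradicting irreducibility. The only cosmetic difference is that you separate each step into a unipotent conjugation followed by a lattice rescaling, whereas the paper combines these into a single conjugation by $\left(\begin{smallmatrix}1 & x_i \\ 0 & \pi\end{smallmatrix}\right)$; the bookkeeping is equivalent.
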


In more general settings, such as that employed by Mazur--Wiles \cite{mw} and Wiles \cite{wiles} in their study of the Iwasawa Main Conjecture, the ring $\T$ cannot be assumed to be a DVR.  It will usually be a complete local Noetherian ring, perhaps reduced.  An example of such a ring that is not a DVR is 
\[ \T = \{(a, b) \in \Z_p \times \Z_p \colon a \equiv b \pmod{p} \}. \]
This example corresponds to two eigenforms with Hecke eigenvalues in $\Z_p$ that are congruent to each other modulo $p$. The total ring of fractions of $\T$ is $K = \Frac(\T) = \Q_p \times \Q_p$.

 In addition, the ideal $I \subset \T$
modulo which the characteristic polynomial of $\rho$ factors will in general not be the maximal ideal of $\T$.
One then wants to construct a cohomology class that generates a module that (in a sense we will make precise in a moment) is ``as large as" $\T/I$.
The first version of Ribet's Lemma that applies in this case was proven by Mazur--Wiles  and Wiles.  Their work was groundbreaking and had a profound impact, leading to Wiles' theory of pseudorepresentations.
In the statement below, a cohomology class $\kappa$ valued in a $\T$-module $M$ is called {\em surjective} if the image of every representative cocycle generates $M$ as a $\T$-module. 
\begin{theorem}[Ribet's Lemma, Version 2] \label{t:v2}
Let $\T$ be
 a complete reduced local Noetherian ring,
and let $\fm$ denote its maximal ideal.
Let $\cG$ be a compact group.
Suppose we are given a continuous
representation
\[ \rho\colon \cG \longrightarrow \GL_2(K), \qquad K = \Frac(\T), \]
such that for any projection onto a field $K \rightarrow k$, the projection of $\rho$ to a representation $\cG \rightarrow \GL_2(k)$ is irreducible.
Let $I \subset \T$ be a proper ideal. Suppose that \begin{equation} \label{e:rhocong2}
 \text{char}(\rho(g)) \equiv (x - \chi_1
(g))(x - \chi_2(g)) \pmod{I} \end{equation}
for two characters $\chi_1, \chi_2 \colon \cG \lra \T^*$ satisfying $\chi_1 \not\equiv 
\chi_2 \pmod{\fm}$.
Then there exists a fractional ideal of $\T$, $B \subset K$, and a surjective cohomology class
\[ \kappa \in H^1(G, B/IB(\chi_2^{-1}\chi_1)). \]
\end{theorem}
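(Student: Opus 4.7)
The plan is to follow the Mazur--Wiles/Wiles template: put $\rho$ into an integral form, diagonalize a well-chosen group element, read off the upper-right matrix coefficients, and check that these produce a surjective cocycle valued in the required twist.

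Step 1 (lattice and diagonalization). Because $\cG$ is compact and $\rho$ is continuous with $\T$ a complete local Noetherian ring, there exists a $\rho(\cG)$-stable $\T$-lattice in $K^2$; in a $\T$-basis for it, $\rho\colon\cG \to \GL_2(\T)$. By the hypothesis $\chi_1 \not\equiv \chi_2 \pmod{\fm}$, I pick $g_0 \in \cG$ with $u := \chi_1(g_0) - \chi_2(g_0) \in \T^\times$. The characteristic polynomial of $\rho(g_0)$ has unit discriminant modulo $\fm$, so Hensel's Lemma over $\T$ gives a factorization $(x-\alpha_1)(x-\alpha_2)$ with $\alpha_i \in \T$ and $\alpha_i \equiv \chi_i(g_0) \pmod I$. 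The two eigenspaces of $\rho(g_0)$ are then free $\T$-modules of rank one that split $\T^2$, and after changing basis I may assume $\rho(g_0) = \diag(\alpha_1,\alpha_2)$ with $\rho(\cG) \subset \GL_2(\T)$. Write $\rho(g) = \begin{pmatrix} a(g) & b(g) \\ c(g) & d(g) \end{pmatrix}$.

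Step 2 (entrywise congruences). Combining $\tr\rho(g) \equiv \chi_1(g) + \chi_2(g) \pmod I$ with $\tr(\rho(g)\rho(g_0)) = a(g)\alpha_1 + d(g)\alpha_2 \equiv \chi_1(g)\chi_1(g_0) + \chi_2(g)\chi_2(g_0) \pmod I$ yields a linear system in $(a(g)-\chi_1(g),\, d(g)-\chi_2(g))$ whose coefficient matrix $\begin{pmatrix} 1 & 1 \\ \alpha_1 & \alpha_2 \end{pmatrix}$ has unit determinant $\alpha_2-\alpha_1 \equiv -u \pmod I$. Hence $a(g) \equiv \chi_1(g)$ and $d(g) \equiv \chi_2(g) \pmod I$.

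Step 3 (the ideal and the cocycle). Let $B \subset \T$ be the ideal generated by $\{b(g) : g \in \cG\}$; as an ideal of $\T$ this is in particular a fractional ideal in $K$. The irreducibility of every projection of $\rho$ to a residue field forces $B$ to have nonzero image modulo each minimal prime $\fp$ of $\T$, since otherwise the projection of $\rho$ to $\GL_2(\Frac(\T/\fp))$ would be upper triangular. The $(1,2)$-entry of $\rho(gh) = \rho(g)\rho(h)$ gives $b(gh) = a(g)b(h) + b(g)d(h)$, which modulo $IB$ becomes $b(gh) \equiv \chi_1(g)b(h) + b(g)\chi_2(h) \pmod{IB}$ by Step 2. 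Dividing by $\chi_2$, the map $\kappa(g) := \chi_2(g)^{-1} b(g) \bmod IB$ satisfies $\kappa(gh) = (\chi_1\chi_2^{-1})(g)\,\kappa(h) + \kappa(g)$, so it defines a class in $H^1(\cG, B/IB(\chi_2^{-1}\chi_1))$, surjective by the very definition of $B$.

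The main obstacle I expect is Step 1: securing the $\rho(\cG)$-stable lattice and ensuring that the eigenspaces of $\rho(g_0)$ split off as free $\T$-summands with eigenvalues already in $\T$. This uses completeness of $\T$ and Hensel's Lemma in an essential way, and is what lets the rest of the argument run inside $\GL_2(\T)$ rather than having to manipulate a genuinely fractional $B \subset K$ throughout. A secondary delicacy is checking that the cohomology class is surjective and not merely nonzero, which is handled by choosing $B$ as tightly as the values $b(g)$ allow.
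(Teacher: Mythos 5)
Your overall template matches the paper's---pick $g_0$ with $\chi_1(g_0)-\chi_2(g_0)\in\T^\times$, diagonalize $\rho(g_0)$, read off the diagonal entries from two trace congruences, form $B$ from the values $b(g)$, and package $\chi_2^{-1}b$ as a cocycle. But Step 1 has a genuine gap, and it is exactly the point the paper flags as the key difference from the DVR case.

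The gap: you cannot, in general, conjugate $\rho$ into $\GL_2(\T)$. Compactness does give a $\rho(\cG)$-stable $\T$-lattice $L\subset K^2$ (sum the translates of $\T^2$; Noetherianity and boundedness make this finitely generated), but a finitely generated torsion-free $\T$-module of rank $2$ need not be \emph{free} when $\T$ is not a DVR. For instance with $\T=\{(a,b)\in\Z_p\times\Z_p : a\equiv b\ (p)\}$, a stable lattice can fail to admit a $\T$-basis, and then there is no sense in which ``$\rho\colon\cG\to\GL_2(\T)$ in a $\T$-basis for $L$.'' The paper explicitly warns: ``unlike the DVR case, we cannot ensure that $\rho$ takes values in $\GL_2(\T)$, only $\GL_2(K)$.'' Everything in your proof downstream of ``$\rho(\cG)\subset\GL_2(\T)$'' is therefore unsupported, and in particular your claim that $B$ is an honest ideal of $\T$ is not available---this is precisely why the statement allows $B$ to be a \emph{fractional} ideal. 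The fix (which is what the paper does) is to choose the eigenbasis of $\rho(g_0)$ over $K$ only, making no integrality assumption, and then prove directly (via the two trace congruences and the unit $\alpha_1-\alpha_2$) that $a(g),d(g)\in\T$; nothing is asserted about $b(g),c(g)$ beyond what the irreducibility hypothesis gives, namely that the $\T$-span $B$ of the $b(g)$ is a fractional ideal.

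A secondary issue is your surjectivity claim ``surjective by the very definition of $B$.'' Surjectivity is a property of the cohomology \emph{class}, not of your chosen cocycle: you must show that every cohomologous cocycle $\kappa'(\sigma)=\kappa(\sigma)+(\chi_2^{-1}\chi_1(\sigma)-1)x$ also has values generating $B/IB$. The paper's argument uses $g_0$ once more: since $\kappa(g_0)=0$, one gets $\kappa'(g_0)=(\chi_2^{-1}\chi_1(g_0)-1)x$ with the scalar a unit, so $x$ lies in the span of the values of $\kappa'$, and hence so does every $\kappa(\sigma)=\kappa'(\sigma)-(\chi_2^{-1}\chi_1(\sigma)-1)x$. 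This step is short but not automatic, and your sketch omits it.
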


The assumption $\chi_1 \not\equiv \chi_2 \pmod{\fm}$ is essential in Wiles' approach to Theorem~\ref{t:v2}, and it has important consequences.
In the Main Conjecture of Iwasawa Theory, one has $\chi_1$ equal to the trivial character and $\chi_2$ equal to a totally odd character of a totally real field.
Let $c$ denote complex conjugation.  When $p \neq 2$, we have $\chi_1(c) = 1 \not\equiv -1 = \chi_2(c) \pmod{p}$.  However when $p=2$ we may have $\chi_1 \equiv \chi_2 \pmod{p}$, and Theorem~\ref{t:v2} cannot be applied.  This is the main reason that Wiles sets $p \neq 2$ in his proof of the Main Conjecture.

The purpose of this exposition is to describe the main theorem of our paper \cite{jjm}, joint with Mahesh Kakde, Jesse Silliman, and Jiuya Wang, in which we establish a version of Ribet's Lemma that holds even if $\chi_1 \equiv \chi_2 \pmod{\fm}$.  Here we describe the proof of the following result, a simplified form of Theorem 2.1 of {\em loc.\ cit.}
\begin{theorem}[Ribet's Lemma, Version 3] \label{t:v3}
Let $\T$ be
 a complete reduced local Noetherian ring,
and let $\fm$ denote its maximal ideal.
Let $\cG$ be a compact group.
Suppose we are given a continuous
representation
\[ \rho\colon \cG \longrightarrow \GL_2(K), \qquad K = \Frac(\T), \]
such that for any projection onto a field $K \rightarrow k$, the projection of $\rho$ to a representation $\cG \rightarrow \GL_2(k)$ is irreducible.
Let $I \subset \T$ be a proper ideal. Suppose that \begin{equation} \label{e:charcong3}
 \text{char}(\rho(g)) \equiv (x - \chi_1
(g))(x - \chi_2(g)) \pmod{I} \end{equation}
for two characters $\chi_1, \chi_2 \colon \cG \lra \T^*$. Then there exists a finitely generated $\T$-module $M$ and a surjective cohomology class
\[ \kappa \in H^1(G, M(\chi_2^{-1}\chi_1)) \]
such that \[ \Fitt_{\T}(M) \subset I. \]
\end{theorem}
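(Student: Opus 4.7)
The overall strategy is to adapt the generalized matrix algebra (GMA) formalism of Bellaïche--Chenevier to the residually indistinguishable setting, reading off the class $\kappa$ and the module $M$ from the off-diagonal blocks of an appropriate subalgebra of $M_2(K)$. First, I would work with the $\T$-subalgebra $R := \T[\rho(\cG)] \subset M_2(K)$ generated by the image of $\rho$. Since $\cG$ is compact and $\rho$ continuous, $R$ is a finitely generated $\T$-module, and the generic irreducibility hypothesis combined with $\T$ being reduced forces $R \otimes_\T K = M_2(K)$. In particular, there is a $\cG$-stable, finitely generated $\T$-submodule $L \subset K^2$ of full generic rank, which is the lattice from which the GMA structure will be extracted.

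Second, I would try to realize $R$ (after conjugating by an element of $\GL_2(K)$ and possibly enlarging $L$) inside a GMA of shape
\[ R \hookrightarrow \begin{pmatrix} \T & B \\ C & \T \end{pmatrix} \subset M_2(K), \]
where $B, C \subset K$ are fractional $\T$-submodules and the two diagonal entries reduce modulo $I$ to $\chi_1$ and $\chi_2$ respectively. Matrix multiplication induces a pairing $m \colon B \otimes_\T C \to \T$, and the hypothesis that $\text{char}(\rho(g))$ factors as $(x - \chi_1(g))(x - \chi_2(g))$ modulo $I$ forces $\im(m) \subset I$. Writing $\rho(g) = \mat{a(g)}{b(g)}{c(g)}{d(g)}$ in such a basis, the map $b \colon \cG \to B$ satisfies the twisted cocycle relation for $\chi_2^{-1}\chi_1$, so after quotienting by an appropriate coboundary submodule one obtains the required class $\kappa \in H^1(\cG, M(\chi_2^{-1}\chi_1))$ with a surjective representative. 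The Fitting-ideal bound $\Fitt_\T(M) \subset I$ is then a formal consequence of $\im(m) \subset I$: relations on $M$ are produced by multiplication by elements of $C$, and the resulting relation matrix has entries in $I$.

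The chief difficulty, and the main technical novelty of the joint paper, is constructing such a GMA in the absence of residual distinguishability. The classical approach produces idempotents $e_1, e_2 \in R$ as limits of polynomials in a single carefully chosen $\rho(g_0)$; that construction requires $\chi_1(g_0) - \chi_2(g_0)$ to be a unit in $\T$ for some $g_0$, which is exactly what fails when $\chi_1 \equiv \chi_2 \pmod{\fm}$. To get around this, I would base change along an extension $\widetilde{\T} \supset \T$ which formally separates the two characters---for instance by adjoining an auxiliary parameter $t$ and deforming one of the characters by $(1 + t)$, or by passing to a well-chosen normalization---perform the GMA construction over $\widetilde{\T}$, and then descend the modules $B$, $C$ and the pairing $m$ back to $\T$. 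The delicate point is to ensure that the descent does not enlarge $M$ beyond what the Fitting-ideal bound allows; showing that $\Fitt_\T(M) \subset I$ survives the descent, and that the resulting cocycle remains surjective, is where I expect essentially all of the real work to live.
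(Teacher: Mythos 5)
Your proposal takes a genuinely different route from the paper, but it contains a gap at exactly the point where the residual indistinguishability creates the real difficulty, and your proposed workaround is not a proof.

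The paper's construction avoids the GMA formalism entirely. It defines $M = \rho(\Delta)/\rho(\Delta^2)$, where $\Delta \subset \T[\cG]$ is the augmentation ideal; the surjection $\cG^{\ab} \otimes \T \cong \Delta/\Delta^2 \twoheadrightarrow M$ automatically furnishes a surjective class $\kappa \in H^1(\cG, M)$. The entire difficulty is then pushed into proving $\Fitt_\T(M) \subset I$, which the paper handles by a determinant manipulation: given a relation matrix $D$ for $M$, one forms an altered matrix $D'$ with $\det(D') = 0$, and shows $\det(D') - \det(D) \in I$ by passing to formal variables, applying the fundamental theorem of matrix invariant theory to identify the ring of traces and determinants with $\GL_2$-invariants, and then using rational cohomology of the Borel and a Koszul-complex argument to place the relevant element in that invariant subring. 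No choice of basis, idempotents, or GMA structure is ever made.

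Your approach, by contrast, hinges on realizing $R = \T[\rho(\cG)]$ inside a GMA $\begin{pmatrix} \T & B \\ C & \T \end{pmatrix}$ with the diagonal entries reducing to $\chi_1, \chi_2 \pmod{I}$. This is precisely what fails: the paper states explicitly at the start of \S 4 that in the residually indistinguishable case one does not know how to show $a(\sigma), d(\sigma) \in \T$ (let alone the congruences of Lemma~\ref{l:adcong}) in \emph{any} basis. The Bella\"iche--Chenevier idempotent construction requires $\chi_1(g_0) - \chi_2(g_0) \in \T^*$ for some $g_0$, and without it the GMA shape you want is not available. Your proposed remedy---base change to some $\widetilde{\T}$ where the characters are artificially separated (e.g.\ deforming by an auxiliary parameter), build the GMA there, and descend $B$, $C$, $m$---is not a proof but a research direction, and you yourself flag that all the substance is in the descent step. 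Two concrete problems: (i) deforming $\chi_2$ destroys the hypothesis that the characteristic polynomial factors through the \emph{given} $\chi_1, \chi_2 \pmod{I}$, so it is unclear what congruence survives over $\widetilde{\T}$; and (ii) even if a GMA exists over $\widetilde{\T}$, the module $B$ there may have no natural $\T$-structure, and controlling $\Fitt_\T$ after descent is an unsolved issue in your sketch. The actual paper sidesteps both by never choosing a basis and never needing a GMA; the price is the much more indirect invariant-theoretic argument for the Fitting ideal inclusion.
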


  The ideal $\Fitt_{\T}(M)$ is the 0th Fitting ideal of the module $M$, which will be defined in \S\ref{s:fitt}.  Intuitively, the inclusion $\Fitt_{\T}(M) \subset I$ says that $M$ is ``large.''

In \cite{bsapet}, we prove the Brumer--Stark conjecture at $p=2$ using a suitably generalized version of Theorem~\ref{t:v3}.  Previously, we proved the conjecture over $\Z[1/2]$ in \cite{bs}, with the prime $p=2$ being avoided for reasons of residual distinguishability.   We hope that further strengthenings of Theorem~\ref{t:v3} (for example, to groups other than $\GL_2$) could have other arithmetic applications.  

In this paper, we simplify notation by setting $\chi_1$ and $\chi_2$ to be the trivial character; the case of general $\chi_1, \chi_2$ requires no extra ideas, but the notation is heavier.  A more significant change in this paper relative to \cite{jjm} is that all the versions of Ribet's method stated above do not include local conditions on the cohomology classes constructed.  In  arithmetic applications, ranging from Ribet's original proof of the converse to Herbrand's theorem to our proof of the Brumer--Stark conjecture, local conditions are always necessary.  To prove the local properties we need in the Brumer--Stark context, the argument presented here is  generalized in \cite{jjm} using the Buchsbaum--Rim resolution of determinantal ideals; in this paper, the simple Kozsul complex suffices.  In {\em loc.\ cit.}~we also give a mild generalization to certain non-reduced Hecke algebras $\T$.

\bigskip

It is an honor to contribute this article to the memorial volume for Jo\"el Bella\"iche.   Jo\"el was a wonderful collaborator and dear friend.  We discussed the residually indistinguishable case of Ribet's Lemma in 2010, at which time both of us felt the problem was intractable.  It is a great sadness that I cannot share this result with my colleague who perhaps would have appreciated it the most.

\section{The DVR case} \label{s:dvr}

In this section we prove Theorem~\ref{t:v1}, Ribet's original setting.  Let $\T$ be
 a complete DVR and let $\fm$ denote its maximal ideal.
Let $\cG$ be a compact group.
We are given a continuous irreducible 
representation
\[
\rho\colon \cG \longrightarrow \GL_2(K), \qquad K = \Frac(\T), 
\]
such that  \begin{equation} \label{e:rhobardvr} \text{char}(\rho(g)) \equiv (x - \chi_1
(g))(x - \chi_2(g)) \pmod{\mathfrak{m}} \end{equation}
for two characters $\chi_1, \chi_2 \colon \cG \lra \T^*.$

\begin{lemma}  The representation $\rho$ may be conjugated to have image contained in $\GL_2(\T)$, and such that the reduction $\overline{\rho}$ has the shape
\[ \overline{\rho} = \mat{\overline{\chi}_1}{*}{0}{\overline{\chi}_2}. \]
\end{lemma}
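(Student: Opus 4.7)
The plan is to prove this in two steps: first conjugate $\rho$ into $\GL_2(\T)$, and then conjugate further by an element of $\GL_2(\T)$ to arrange the prescribed shape of $\bar\rho$.

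For the first step I would invoke the standard ``compact implies stabilizes a lattice'' argument. Because $\cG$ is compact and $\rho$ is continuous, $\rho(\cG)$ is compact, hence bounded, in $\GL_2(K)$. The $\T$-submodule
\[ L \;:=\; \sum_{g\in\cG} \rho(g)\,\T^2 \;\subset\; K^2 \]
is therefore bounded, contains $\T^2$, and is $\cG$-stable by construction. Since $\T$ is a DVR (hence a PID), any bounded torsion-free $\T$-submodule of $K^2$ containing $\T^2$ is free of rank $2$. Choosing a $\T$-basis of $L$ and expressing $\rho$ in this basis conjugates $\rho$ into $\GL_2(\T)$.

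For the second step, reduce modulo $\fm$ to obtain $\bar\rho\colon \cG \to \GL_2(\T/\fm)$. The hypothesis \eqref{e:rhobardvr} says $\tr\bar\rho(g) = \bar\chi_1(g)+\bar\chi_2(g)$ and $\det\bar\rho(g) = \bar\chi_1(g)\bar\chi_2(g)$ for every $g\in\cG$. By the Brauer--Nesbitt theorem, the semisimplification $(\bar\rho)^{\sms}$ is isomorphic to $\bar\chi_1\oplus\bar\chi_2$. In particular $\bar\rho$ admits a one-dimensional subrepresentation; after relabeling $\chi_1\leftrightarrow\chi_2$ if needed, the character acting on this sub is $\bar\chi_1$. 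Choose a nonzero $\bar v_1\in(\T/\fm)^2$ spanning it and lift to a vector $v_1\in\T^2\setminus\fm\T^2$. Because $\T$ is local, some coordinate of $v_1$ is a unit, so $v_1$ extends to a $\T$-basis $(v_1,v_2)$ of $\T^2$ with change-of-basis matrix $P=[v_1\,|\,v_2]\in\GL_2(\T)$. Conjugating by $P$ keeps the image inside $\GL_2(\T)$ and arranges that $\bar\rho$ stabilizes the first coordinate axis with character $\bar\chi_1$, so $\bar\rho=\mat{\bar\chi_1}{*}{0}{d}$ for some character $d\colon\cG\to(\T/\fm)^*$. Comparing traces forces $d=\bar\chi_2$.

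The only step of any real subtlety is the lattice construction in step one, which leans essentially on the DVR hypothesis (boundedness plus PID produces a finitely generated, hence free, $\T$-submodule of rank $2$); this is exactly the point where the proof will need to be replaced by something more delicate in Theorems~\ref{t:v2} and \ref{t:v3}, where $\T$ is only a complete reduced local Noetherian ring. Once inside $\GL_2(\T)$, the rest is Brauer--Nesbitt plus elementary linear algebra over the local ring $\T$, and would go through without difficulty.
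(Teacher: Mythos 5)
Your first step is fine and is essentially the standard proof of the fact the paper cites (maximal compact subgroups of $\GL_2(K)$ are conjugates of $\GL_2(\T)$); both routes arrive at the same place.

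The second step has a genuine gap at the clause ``after relabeling $\chi_1\leftrightarrow\chi_2$ if needed.'' The characters $\chi_1,\chi_2$ are fixed data, and the lemma asserts specifically that $\bar\chi_1$ appears in the upper-left and $\bar\chi_2$ in the lower-right; this asymmetric form is what feeds into Lemma~\ref{l:kappadef} and ultimately into a class in $H^1(\cG,\T/\fm(\chi_2^{-1}\chi_1))$ as opposed to $H^1(\cG,\T/\fm(\chi_1^{-1}\chi_2))$. You cannot simply swap the names. More to the point, your construction of the $\cG$-stable lattice $L$ pins down a specific $\bar\rho$ up to $\GL_2(\T/\fm)$-conjugacy, and the set of one-dimensional subrepresentations of $\bar\rho$ is an invariant of that conjugacy class: if $\bar\chi_1$ does not appear as a sub, no change of $\T$-basis of $L$ (i.e.\ no conjugation by $\GL_2(\T)$) will make it appear. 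So your Step 2 proves only that one of the two ordered forms $\mat{\bar\chi_1}{*}{0}{\bar\chi_2}$ or $\mat{\bar\chi_2}{*}{0}{\bar\chi_1}$ is achievable, not the asserted one.

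The paper closes exactly this gap by conjugating by $\mat{\pi}{0}{0}{1}\in\GL_2(K)\setminus\GL_2(\T)$, which amounts to replacing your lattice $L$ by a different $\cG$-stable lattice. Because the lower-left entry of $\rho$ is already divisible by $\pi$, the conjugated representation still lands in $\GL_2(\T)$, but its reduction now has $\bar b\equiv 0$ and a possibly nontrivial lower-left entry; a further conjugation by the permutation matrix (which is in $\GL_2(\T)$) then produces the desired upper-triangular form with $\bar\chi_1$ on top. This is not merely cosmetic: it is precisely the point where the DVR hypothesis is used beyond the lattice argument, and the same device of conjugating by $\mat{1}{x_i}{0}{\pi}$ to change the underlying lattice recurs in the paper's inductive proof of Theorem~\ref{t:v1}. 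Your proof needs this extra conjugation step (or an equivalent lattice modification) to be complete.
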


\begin{proof}
The maximal compact subgroups of $\GL_2(K)$ are precisely the conjugates of $\GL_2(\T)$.  Since $\cG$ is compact and $\rho$ is continuous, it follows $\rho$ that may be conjugated to have image contained in $\GL_2(\T)$.  Now (\ref{e:rhobardvr}) states that
\begin{equation} \label{e:rhobardvr2}
 \text{char}(\overline{\rho}(g)) = (x - \overline{\chi}_1(g))(x - \overline{\chi}_2(g)) \text{ in } \T/\fm. \end{equation}
The Brauer--Nesbitt Theorem \cite{bn} states that if two representations over a field have the same characteristic polynomial, then their semisimplifications are isomorphic.  Therefore, (\ref{e:rhobardvr2}) implies that $\overline{\rho}^{\sms} \cong  \overline{\chi}_1 \oplus\overline{\chi}_2$.
Hence we have either
\begin{equation} \label{e:twocases}
  \overline{\rho} \cong \mat{\overline{\chi}_1}{*}{0}{\overline{\chi}_2} \qquad \text{ or }  \qquad \overline{\rho} \cong \mat{\overline{\chi}_2}{*}{0}{\overline{\chi}_1}.
\end{equation}
It is a pleasant exercise to prove (\ref{e:twocases}) directly from (\ref{e:rhobardvr2}) without reference to the full strength of the Brauer--Nesbitt Theorem.  Now, suppose we are in the second case.  Then we can conjugate $\rho$ by the matrix $\mat{\pi}{0}{0}{1}$, where $\pi$ is a uniformizer of $\T$, to obtain
\[ \overline{\rho} \cong \mat{\overline{\chi}_2}{0}{*}{\overline{\chi}_1} \cong  \mat{\overline{\chi}_1}{*}{0}{\overline{\chi}_2} \]
as desired.
\end{proof}

\begin{lemma} \label{l:kappadef}  Suppose that the representation $\rho$ has been conjugated so that its image lands in $\GL_2(\T)$ and
\[ \overline{\rho} =  \mat{\overline{\chi}_1}{\overline{b}}{0}{\overline{\chi}_2}.
\]
Then the function $\kappa(\sigma) = \overline{\chi}_2^{-1}(\sigma) \overline{b}(\sigma)$ is a 1-cocycle in $Z^1(\cG, \T/\fm(\chi_2^{-1}\chi_1))$.
\end{lemma}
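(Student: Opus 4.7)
The plan is to verify directly that the function $\kappa$ satisfies the cocycle relation, using only the fact that $\overline{\rho}$ is a group homomorphism with the prescribed upper-triangular shape. The module $\T/\fm(\chi_2^{-1}\chi_1)$ denotes $\T/\fm$ equipped with the $\cG$-action given by multiplication by $\overline{\chi}_2^{-1}\overline{\chi}_1$, so the condition to verify is
\[
\kappa(\sigma\tau) = \kappa(\sigma) + (\overline{\chi}_2^{-1}\overline{\chi}_1)(\sigma)\,\kappa(\tau) \qquad \text{for all } \sigma,\tau \in \cG.
\]

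First, I would expand the identity $\overline{\rho}(\sigma\tau) = \overline{\rho}(\sigma)\overline{\rho}(\tau)$ in terms of matrix entries. The diagonal entries simply recover the fact that $\overline{\chi}_1$ and $\overline{\chi}_2$ are characters. The upper-right entry yields the crucial relation
\[
\overline{b}(\sigma\tau) = \overline{\chi}_1(\sigma)\,\overline{b}(\tau) + \overline{b}(\sigma)\,\overline{\chi}_2(\tau),
\]
which says that $\overline{b}$ is a ``twisted'' cocycle valued in $\T/\fm$.

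Next, I would divide this identity by $\overline{\chi}_2(\sigma\tau) = \overline{\chi}_2(\sigma)\overline{\chi}_2(\tau)$, which is a unit in $\T/\fm$, and collect terms. This directly produces
\[
\kappa(\sigma\tau) = (\overline{\chi}_2^{-1}\overline{\chi}_1)(\sigma)\,\kappa(\tau) + \kappa(\sigma),
\]
which is precisely the desired cocycle relation. Finally, I would note that $\kappa$ is continuous because $\overline{\rho}$ is continuous (so $\overline{b}$ is) and $\overline{\chi}_2$ takes values in the units of $\T/\fm$, so $\kappa \in Z^1(\cG, \T/\fm(\chi_2^{-1}\chi_1))$.

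There is no real obstacle here: the lemma is a bookkeeping exercise that simply rewrites the multiplicativity of the upper-triangular matrix $\overline{\rho}$ in cohomological language. The only mild point to be careful about is the sign/order convention for the cocycle identity, and that the twist $\chi_2^{-1}\chi_1$ (rather than $\chi_1^{-1}\chi_2$) comes from normalizing by the lower-right diagonal entry, which is the eigenvalue $\overline{\chi}_2$ acting on the cokernel of the chosen stable line.
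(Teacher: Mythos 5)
Your proof is correct and follows exactly the same route as the paper: read off the upper-right entry of $\overline{\rho}(\sigma\tau)=\overline{\rho}(\sigma)\overline{\rho}(\tau)$ to get the twisted relation for $\overline b$, then divide by $\overline{\chi}_2(\sigma\tau)$ to obtain the cocycle identity for $\kappa$. The added remark on continuity is a fine (if implicit in the paper) observation; otherwise the arguments coincide.
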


\begin{proof}
Since $\overline{\rho}$ is a matrix representation, we have
\[ \overline{b}(\sigma \tau) = \overline{\chi_1}(\sigma) \overline{b}(\tau) + \overline{b}(\sigma) \overline{\chi_2}(\tau). \]
Multiplying by $\overline{\chi}_2^{-1}(\sigma \tau)$ gives the desired 1-cocycle formula for 
$\kappa = \overline{\chi}_2^{-1} \overline{b}$.
\end{proof}

What remains to prove Theorem~\ref{t:v1} is to show that after conjugating $\rho$ further, we can arrange for the cohomology class represented by $\kappa$ to be non-trivial.

\begin{proof}[Proof of Theorem~\ref{t:v1}]
In his 2008 lectures from the Clay Summer School in Hawaii, Bella\"iche gives a beautiful and conceptual proof of the fact that $\rho$ can be chosen so that the cohomology class represented by $\kappa$ is non-trivial \cite{clay}*{Proposition 1.4}.  He attributes this proof to Serre.  Here, we take the more computational approach applied by Ribet.

Since we will be applying a recursive process, let $\rho_1 = \rho$ and write
\[ \rho_1(\sigma) = \mat{a_1(\sigma)}{b_1(\sigma)}{c_1(\sigma)}{d_1(\sigma} \in \GL_2(\T). \]
If $\pi$ denotes a uniformizer of $K = \Frac(\T)$, then we have
\begin{alignat}{2}
a_1(\sigma) & \equiv \chi_1(\sigma) && \pmod{\pi}, \label{e:acong} \\
c_1(\sigma)  & \equiv 0 && \pmod{\pi}, \\
d_1(\sigma) & \equiv \chi_2(\sigma) && \pmod{\pi}
\end{alignat}
 for all $\sigma \in \cG$.
Denote the cocycle constructed in Lemma~\ref{l:kappadef} by $\kappa_1$.  

Suppose that $\kappa_1$ represents a trivial cohomology class; then there exists $x_1 \in \T$ such that
\[ \kappa_1(\sigma) \equiv (\chi_2^{-1}\chi_1(\sigma) - 1) x_1 \pmod{\pi} \]
for all $\sigma \in \cG$, or equivalently, 
\begin{equation} \label{e:bcong}
 b_1(\sigma) \equiv (\chi_1(\sigma) - \chi_2(\sigma)) x_1 \pmod{\pi}. \end{equation}
Conjugating the representation $\rho_1$, we define
\[ {\rho}_2(\sigma) = \mat{a_2(\sigma)}{b_2(\sigma)}{c_2(\sigma)}{d_2(\sigma)} = \mat{1}{x_1}{0}{\pi} \rho_1(\sigma) \mat{1}{x_1}{0}{\pi}^{-1}. \]
Using the congruences (\ref{e:acong})--(\ref{e:bcong}), we find that $\rho_2(\sigma) \in \GL_2(\T)$ and furthermore that
\begin{alignat*}{2}
a_2(\sigma) & \equiv \chi_1(\sigma) && \pmod{\pi}, \\
c_2(\sigma)  & \equiv 0 && \pmod{\pi^2}, \\
d_2(\sigma) & \equiv \chi_2(\sigma) && \pmod{\pi}.
\end{alignat*}
We are therefore once again in the setting of Lemma~\ref{l:kappadef} and obtain a cocycle
\[ \kappa_2(\sigma) = \overline{\chi}_2^{-1}(\sigma) \overline{b_2}(\sigma) \in Z^1(\cG, \T/\fm(\chi_2^{-1}\chi_1)). \]
If $\kappa_2$ represents a nontrivial cohomology class, we are done.  If not, we may repeat this process and obtain another
representation $\rho_3$, where now $\pi^3 \mid c_3(\sigma)$.  This process continues.

If at any stage, we obtain a cocycle $\kappa_i$ that represents a non-trivial class, then we are done.  If the process continues forever, then one checks that by defining $x = x_1 + \pi x_2 + \pi^2 x_3 + \cdots$, conjugating the original representation $\rho_1$ by $\mat{1}{x}{0}{1}$ leaves a representation over $K$ with a 0 in the upper right-hand corner.  This contradicts the irreducibility of $\rho_1$. 
\end{proof}

\section{The Residually Distinguishable Case}

In this section, we prove Theorem~\ref{t:v2}.  Our complete local ring $\T$ is no longer assumed to be a DVR, but we grant ourselves the assumption
$\chi_1 \not\equiv \chi_2 \pmod{\fm}$.  Fix $\tau \in \cG$ such that $\chi_1(\tau) \not\equiv \chi_2(\tau) \pmod{\fm}$.  By Hensel's Lemma, the congruence (\ref{e:rhocong2}) implies that $\rho(\tau)$ has two distinct eigenvalues $\lambda_1, \lambda_2 \in \T$ such that \begin{equation} \label{e:lambdacong}
\lambda_i \equiv \chi_i(\tau) \pmod{I} \end{equation} for $i =1,2$.
We choose a basis for $\rho$ over $K = \Frac(\T)$ such that 
\[ \rho(\tau) = \mat{\lambda_1}{0}{0}{\lambda_2}. \]  For $\sigma \in \cG$, write
\[ \rho(\sigma) = \mat{a(\sigma)}{b(\sigma)}{c(\sigma)}{d(\sigma)}. \]
Note that unlike the DVR case, we cannot ensure that $\rho$ takes values in $\GL_2(\T)$, only $\GL_2(K)$.
Nevertheless, we have the following.

\begin{lemma} \label{l:adcong}
We have $a(\sigma), d(\sigma) \in \T$ for all $\sigma \in \cG$, and furthermore
\begin{align*}
 a(\sigma) & \equiv \chi_1(\sigma) \pmod{I},  \\
 d(\sigma) & \equiv \chi_2(\sigma) \pmod{I}.
 \end{align*}
\end{lemma}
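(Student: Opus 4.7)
The plan is to exploit the diagonal matrix $\rho(\tau) = \diag(\lambda_1,\lambda_2)$ together with the fact that $\lambda_1 - \lambda_2 \in \T^\times$. This last point follows from (\ref{e:lambdacong}): since $\chi_1(\tau) \not\equiv \chi_2(\tau) \pmod{\fm}$ and $I \subset \fm$, we have $\lambda_1 - \lambda_2 \not\equiv 0 \pmod{\fm}$, so $\lambda_1 - \lambda_2$ is a unit in the local ring $\T$.

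First I would show $a(\sigma), d(\sigma) \in \T$. The hypothesis (\ref{e:rhocong2}) only makes sense if the characteristic polynomial of $\rho(g)$ already has coefficients in $\T$; in particular $\tr\rho(g) \in \T$ for every $g \in \cG$. Applying this to $g = \sigma$ and $g = \tau\sigma$, and using that $\rho(\tau\sigma) = \diag(\lambda_1,\lambda_2)\rho(\sigma)$, I obtain the two $\T$-linear relations
\[ a(\sigma) + d(\sigma) = \tr\rho(\sigma) \in \T, \qquad \lambda_1 a(\sigma) + \lambda_2 d(\sigma) = \tr\rho(\tau\sigma) \in \T. \]
The coefficient matrix $\mat{1}{1}{\lambda_1}{\lambda_2}$ has determinant $\lambda_2 - \lambda_1 \in \T^\times$, so Cramer's rule solves uniquely for $a(\sigma)$ and $d(\sigma)$ in $\T$.

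Next, reducing modulo $I$ and using (\ref{e:rhocong2}) on both $\sigma$ and $\tau\sigma$ (together with $\lambda_i \equiv \chi_i(\tau) \pmod{I}$, so that $\chi_1(\tau\sigma)+\chi_2(\tau\sigma) \equiv \lambda_1\chi_1(\sigma) + \lambda_2\chi_2(\sigma) \pmod I$), I obtain
\[ (a(\sigma) - \chi_1(\sigma)) + (d(\sigma) - \chi_2(\sigma)) \equiv 0, \quad \lambda_1(a(\sigma)-\chi_1(\sigma)) + \lambda_2(d(\sigma)-\chi_2(\sigma)) \equiv 0 \pmod I. \]
Subtracting $\lambda_2$ times the first relation from the second yields $(\lambda_1 - \lambda_2)(a(\sigma) - \chi_1(\sigma)) \equiv 0 \pmod I$, and invertibility of $\lambda_1 - \lambda_2$ in $\T$ then gives $a(\sigma) \equiv \chi_1(\sigma) \pmod I$. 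The congruence for $d(\sigma)$ follows symmetrically (or from the trace relation).

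There is no real obstacle: the whole argument is a two-by-two linear algebra manipulation powered by the unit $\lambda_1 - \lambda_2$. The only subtle point worth flagging is the implicit convention that $\tr\rho(g), \det\rho(g) \in \T$ for every $g$, without which (\ref{e:rhocong2}) would not literally make sense.
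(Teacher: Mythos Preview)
Your proof is correct and follows essentially the same approach as the paper: exploit the trace congruence for both $\sigma$ and a product of $\sigma$ with $\tau$ (you use $\tau\sigma$, the paper uses $\sigma\tau$, which gives the same $2\times 2$ linear system), then solve using that $\lambda_1-\lambda_2$ is a unit. Your presentation is slightly more explicit in separating the integrality step from the congruence step, and your remark that $\tr\rho(g)\in\T$ is an implicit hypothesis of (\ref{e:rhocong2}) is a fair observation that the paper leaves tacit.
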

\begin{proof}
The congruence (\ref{e:rhocong2}) implies that
\begin{equation} \label{e:ad1}
 a(\sigma) + d(\sigma) \equiv \chi_1(\sigma) + \chi_2(\sigma) \pmod{I}.
 \end{equation}
 Applying this with $\sigma$ replaced by $\sigma \tau$, and noting that $a(\sigma\tau) = a(\sigma)\lambda_1$ and $d(\sigma\tau) = d(\sigma) \lambda_2$ by our choice of basis,
 we obtain \begin{equation} \label{e:ad2}
 a(\sigma)\lambda_1 + d(\sigma)\lambda_2 \equiv \chi_1(\sigma)\chi_1(\tau) + \chi_2(\sigma)\chi_2(\tau). \pmod{I} \end{equation}
 Solving the congruences (\ref{e:ad1}) and (\ref{e:ad2}) using (\ref{e:lambdacong}), we find that $a(\sigma), d(\sigma) \in \T$ and furthermore 
$ a(\sigma) \equiv \chi_1(\sigma) \pmod{I}$ and 
$d(\sigma) \equiv \chi_2(\sigma) \pmod{I}$ as desired.  
Note that this uses the fact that $\chi_1(\tau) \not \equiv \chi_2(\tau) \pmod{\fm}$ in a crucial way.
\end{proof}

We now let $B$ denote the $\T$-submodule of $K$ spanned by the $b(\sigma)$ for $\sigma \in \cG$.
Note that the irreducibility assumption on $\rho$ implies that $B$ is a fractional ideal of $\T$.  Indeed, if we write $K$ as a product of fields, then the projection of 
$\rho$ on to each field factor is irreducible, hence the projection of $B$ onto each field factor is nonzero.

In view of the congruences of Lemma~\ref{l:adcong}, the equation
\[ b(\sigma \sigma') = a(\sigma) b(\sigma') + b(\sigma) d(\sigma') \]
yields
\[ \overline{b}(\sigma \sigma') \equiv \chi_1(\sigma) \overline{b}(\sigma') + \chi_2(\sigma') \overline{b}(\sigma) \quad \text{ in } B/IB. \]
As in Lemma~\ref{l:kappadef}, multiplying by $\chi_2^{-1}(\sigma \sigma')$ shows that 
\[ \kappa(\sigma) = \chi_2^{-1}(\sigma) \overline{b}(\sigma) \in B/IB \]
defines an element of $Z^1(\cG, B/IB(\chi_2^{-1}\chi_1)).$

It remains to prove that the cohomology class represented by $\kappa$ is surjective.
Therefore let
\begin{equation} \label{e:kappa'}
 \kappa'(\sigma) = \kappa(\sigma) + (\chi_2^{-1}\chi_1(\sigma) - 1)x \end{equation}
 be a cohomologous cocycle, where $x \in B/IB$.
Let $B'$ denote the $\T$-submodule of $B/IB$ generated by the values of $\kappa'$.
  Applying (\ref{e:kappa'}) to $\tau$ and recalling that $\kappa(\tau) = 0$, we see that
\[ \kappa'(\tau) = (\chi_2^{-1}\chi_1(\tau) - 1)x \in B'. \]
Since the item in parentheses in a unit in $\T$ by assumption, we find that $x \in B'$.  Going back to (\ref{e:kappa'}), it follows that
$\kappa(\sigma) \in B'$ for all $\sigma \in \cG$.  But it is clear from the definitions of $B$ and $\kappa$ that $\kappa$ generates $B/IB$, whence $B' = B/IB$ as desired.  This completes the proof of Theorem~\ref{t:v2}.

\section{The Residually Indistinguishable Case}

In the remainder of the paper we describe the proof of Theorem~\ref{t:v3}.  Before explaining the definition of the Fitting ideal that appears in the statement of the theorem, we remark that the methods of the previous sections are not directly applicable---in the DVR case, manipulations with the uniformizer $\pi$ were essential, and in the residually distinguishable case, the  basis for $\rho$ afforded by the special element $\tau$ was critical. Indeed, in the residually indistinguishable case we do not know how to show that $a(\sigma), d(\sigma) \in \T$ (let alone that the congruences of Lemma~\ref{l:adcong} hold) in {\em any} basis.  A separate construction and proof is required.

As mentioned in the introduction, we will assume for the rest of the paper that $\chi_1$ and $\chi_2$ are trivial; this offers notational simplifications, but no significant changes to the argument.

\subsection{Fitting Ideal} \label{s:fitt}

Let $R$ be a commutative ring.  Let $M$ be a finitely presented $R$-module.  This  means that there is a short exact sequence
\[  \begin{tikzcd}
R^n \ar[r, "f"] & R^m \ar[r] & M \ar[r] & 0.
\end{tikzcd}
 \]
 \begin{definition}
The 0th Fitting ideal of $M$ over $R$, which we denote $\Fitt_R(M)$, is the ideal of $R$ generated by all $m \times m$ minors of the matrix representing the linear map $f$. By convention, if $n < m$, then $\Fitt_R(M) = 0$.
 
 \end{definition}

 We leave the proof of the following basic facts about Fitting ideals as an exercise for the reader (alternatively, she may consult \cite{nickel}).
 
 \begin{prop} We have the following.
 \begin{itemize}
 \item The Fitting ideal $\Fitt_R(M)$ depends only on $M$ up to $R$-module isomorphism, and not on the particular presentation taken.
\item If $I \subset R$ is a finitely generated ideal, then $\Fitt_R(R/I) = I$.
\item The Fitting ideal of $M$ is contained in the annihilator of $M$: $\Fitt_R(M) \subset \Ann_R(M)$.
\item If $R= \Z$ and $M$ is a finitely generated abelian group, then $\Fitt_{\Z}(M) = 0$ if $M$ is infinite and $\Fitt_{\Z}(M) = (\#M)$, the ideal generated by the size of $M$, if $M$ is finite. 
\item If $M \twoheadrightarrow M'$ is a surjection of finitely presented $R$-modules, then $\Fitt_R(M') \supset \Fitt_R(M)$.
\item (Base Change) If $S$ is an $R$-algebra and $M$ is a finitely presented $R$-module, then $\Fitt_S(M \otimes_R S) = \Fitt_R(M) \cdot S.$
 \end{itemize}
 \end{prop}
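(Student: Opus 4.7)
The plan is to verify each of the six facts directly from the definition, in an order that makes the arguments cumulative. Every assertion reduces to a statement about the ideal of maximal minors of a rectangular matrix over $R$, which can be attacked with standard multilinear algebra.

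First I would establish the well-definedness assertion (i), since every subsequent item tacitly assumes it. The standard route is to show that the minor ideal of an $m \times n$ presentation matrix $A$ is invariant under the two elementary moves that connect any two finite free presentations of $M$: (a) stabilization, replacing $A$ by $A \oplus I_k$ coming from the presentation $R^{n+k} \to R^{m+k} \to M \to 0$, and (b) left- or right-multiplication of $A$ by an invertible square matrix over $R$ (that is, a change of basis in source or target). For (a) one expands the new maximal minors along the appended block and recovers the old ones; for (b) the Cauchy--Binet formula writes each maximal minor of $BA$ or $AB$ as an $R$-linear combination of maximal minors of $A$. Combining this with Schanuel's lemma, which connects any two presentations by such moves, gives invariance.

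With (i) in place, the remaining items proceed as follows. For (ii), the presentation $R^n \xrightarrow{(a_1, \ldots, a_n)} R \to R/I \to 0$ has $1 \times 1$ minors equal to the generators of $I$. For (iii), given any $m \times m$ submatrix $A'$ of the presentation matrix, the identity $\adj(A') \cdot A' = \det(A') \cdot I_m$ expresses $\det(A')$ times each standard basis vector of $R^m$ as an $R$-linear combination of columns of $A'$; these are columns of $A$, hence zero in $M$, so $\det(A') \in \Ann_R(M)$. Item (iv) follows from the structure theorem: $M \cong \Z^r \oplus \Z/d_1 \oplus \cdots \oplus \Z/d_t$ admits a presentation $\Z^t \to \Z^{r+t}$ with $r > 0$ giving more rows than columns (so Fitting ideal $0$ by convention, matching $M$ infinite), and with $r = 0$ giving the square diagonal matrix $\diag(d_1, \ldots, d_t)$ whose top minor is $d_1 \cdots d_t = \#M$. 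For (v), the kernel of $M \twoheadrightarrow M'$ is finitely generated (a standard consequence of both modules being finitely presented), so a presentation of $M'$ is obtained by appending to the presentation matrix of $M$ some lifts of generators of that kernel; every $m \times m$ minor of the original matrix persists as a minor of the enlarged one, giving $\Fitt_R(M) \subset \Fitt_R(M')$. Finally, for (vi), right-exactness of $- \otimes_R S$ turns a presentation of $M$ into one of $M \otimes_R S$ whose matrix is the image of $A$ under $R \to S$; the images of the maximal minors of $A$ then generate $\Fitt_R(M) \cdot S$ in $S$.

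The only genuinely subtle point is (i); the remaining items are essentially formal consequences of the definition together with standard manipulations of determinants and right-exact sequences. Once invariance under stabilization and change of basis is established, everything else follows quickly.
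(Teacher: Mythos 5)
The paper does not give a proof of this proposition at all; it explicitly leaves it as an exercise and refers the reader to Nickel's survey. So there is nothing to compare against, only the question of whether your proof is correct. It is, and it is the standard argument for all six items: invariance under stabilization plus Cauchy--Binet for (i), a one-line presentation of $R/I$ for (ii), the adjugate identity for (iii), the structure theorem for (iv), augmentation of a presentation matrix for (v), and right-exactness of $-\otimes_R S$ for (vi).

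One small slip in item (iii): you cite $\adj(A')\cdot A' = \det(A')\cdot I_m$ as expressing $\det(A')\,e_j$ as a linear combination of the \emph{columns of $A'$}, but reading that product column by column gives $\det(A')\,e_j = \adj(A')\cdot (A')_{\cdot j}$, which is a combination of the columns of $\adj(A')$, not of $A'$. What you want is the other adjugate identity, $A'\cdot\adj(A') = \det(A')\cdot I_m$: then the $j$-th column reads $\det(A')\,e_j = A'\cdot(\adj A')_{\cdot j}$, a linear combination of columns of $A'$, which all lie in $\ker(R^m\to M)$ and hence certify $\det(A')\in\Ann_R(M)$. The idea is right; just swap the order of the two factors. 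Everything else is fine, though in (i) the appeal to ``Schanuel's lemma, which connects any two presentations by such moves'' compresses a genuinely fiddly step --- one must exhibit the two stabilized presentation matrices $A_1\oplus I_{m_2}$ and $A_2\oplus I_{m_1}$ as differing by invertible row/column operations after possibly padding with zero columns --- but that is a standard and correct reduction and your sketch is adequate for an exercise-level proof.
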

 
 \begin{corollary} Let $\T$ and $B$ be as in Theorem~\ref{t:v2}.  Then $\Fitt_{\T}(B/IB) \subset I$.
 \end{corollary}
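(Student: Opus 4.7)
The plan is to establish the stronger claim $\Fitt_\T(B) = 0$ and then deduce the corollary from the Base Change formula. The key arithmetic input is that $B$ has generic rank one over $K$, meaning $B \otimes_\T K = K$; everything else is routine manipulation of presentation matrices.

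First I would observe that $B$ is finitely generated as a $\T$-module. Indeed, since $B$ is a fractional ideal, some non-zerodivisor $d \in \T$ satisfies $dB \subset \T$, and then $dB \cong B$ is a finitely generated ideal because $\T$ is Noetherian. Next I would use the reducedness of $\T$ to decompose $K = \Frac(\T)$ as a finite product of fields (the localizations at the minimal primes of $\T$). The irreducibility hypothesis on $\rho$---already invoked in the proof of Theorem~\ref{t:v2} to see that the projection of $B$ to each field factor is nonzero---actually upgrades to the stronger statement that $B$ generates each factor of $K$ as a $K$-module, which gives $B \otimes_\T K = K$.

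Now I would choose any finite presentation $\T^{\ell} \xrightarrow{V} \T^n \to B \to 0$ and tensor with the flat $\T$-algebra $K$. The resulting exact sequence $K^{\ell} \to K^n \to K \to 0$ forces the image of $V_K$ to have $K$-rank $n-1$, so every $n \times n$ minor of $V$ vanishes in $K$ (vacuously if $\ell < n$, in which case the Fitting ideal is zero by convention anyway). Because $\T$ is reduced, the natural map $\T \to K$ is injective---any element dying after inverting the non-zerodivisors must itself vanish---so these minors vanish already in $\T$, giving $\Fitt_\T(B) = 0$.

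To close, I would invoke Base Change twice for the quotient $\T \to \T/I$: once with $M = B$, giving $\Fitt_{\T/I}(B/IB) = \Fitt_\T(B) \cdot (\T/I) = 0$; and once with $M = B/IB$ (which is already a $\T/I$-module, hence equal to its own base change), giving $\Fitt_{\T/I}(B/IB) = \Fitt_\T(B/IB) \cdot (\T/I)$. Combining these, $\Fitt_\T(B/IB) \cdot (\T/I) = 0$, i.e., $\Fitt_\T(B/IB) \subset I$. The only obstacle of substance is the rank-one identity $B \otimes_\T K = K$: this is where the irreducibility of $\rho$ and the reducedness of $\T$ both genuinely enter; the rest is clean Fitting ideal formalism.
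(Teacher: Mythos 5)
Your proof is correct, and it follows the same overall architecture as the paper's: first establish $\Fitt_{\T}(B) = 0$, then pass through base change along $\T \to \T/I$ to conclude $\Fitt_{\T}(B/IB) \subset I$. The difference is entirely in how the vanishing $\Fitt_{\T}(B) = 0$ is obtained. The paper does this in one line using the proposition's fact that $\Fitt_R(M) \subset \Ann_R(M)$: since $B$ is a fractional ideal it is a faithful $\T$-module, so $\Ann_{\T}(B) = 0$ and hence $\Fitt_{\T}(B) = 0$. You instead rederive the vanishing by hand: tensor a presentation with $K$, observe that the generic rank of $B$ is one (so the relation matrix has image of rank $n-1$ over each field factor of $K$, killing all $n\times n$ minors), and then use the injection $\T \hookrightarrow K$ to pull this back. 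Both arguments ultimately exploit the same two facts---that $B$ is nonzero in each factor of $K$ (from irreducibility of $\rho$) and that $\T$ embeds in $K$ (from reducedness)---so neither is strictly more general; the paper's route is shorter because the containment $\Fitt \subset \Ann$ packages the minor computation abstractly, whereas yours makes that computation explicit via a rank count. Your two-step base-change bookkeeping at the end is also a faithful, if slightly more verbose, expansion of what the paper compresses into ``Therefore $\Fitt_{\T/I}(B/IB) = 0$, whence $\Fitt_{\T}(B/IB) \subset I$.''
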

 
 \begin{proof}  Since $B$ is a fractional ideal of $\T$, it is a faithful $\T$-module, i.e.\ $\Ann_{\T}(B) = 0$.  It follows that $\Fitt_{\T}(B) = 0$.  Therefore $\Fitt_{\T/I}(B/IB) = 0$, whence $\Fitt_{\T}(B/IB) \subset I$.
 \end{proof}
 
 Motivated by this corollary, one of the insights of Theorem~\ref{t:v3} is its statement---in the residually indistinguishable case, instead of attempting to necessarily construct a $\T$-module of the form $B/IB$ for a faithful $\T$-module $B$, one should just attempt to construct some module $M$ such that $\Fitt_{\T}(M) \subset I$.
 
 \subsection{Construction of \texorpdfstring{$M$}{M}}
 
 To prove Theorem~\ref{t:v3}, we must construct, under the assumptions of the theorem, a finitely generated $\T$-module $M$ such that $\Fitt_{\T}(M) \subset I$ and a surjective cohomology class $\kappa \in H^1(\cG, M)$.  Since we have specialized to $\chi_1 = \chi_2 = 1$, the $\cG$-action on $M$ is trivial, whence $ H^1(\cG, M)$ is the group of continuous homomorphisms $\cG \rightarrow M$.  Since $M$ is abelian, such a homomorphism necessarily factors through the abelianization $\cG^{\ab}$ and thereby induces a $\T$-module map \begin{equation} \label{e:gtm}
 \cG^{\ab} \otimes \T \lra M. \end{equation} 
 The surjectivity of $\kappa$ is simply the statement that the $\T$-module homomorphism (\ref{e:gtm}) is surjective.
 
 These considerations lead to a very natural construction of the module $M$.  Let $\Delta$ denote the augmentation ideal of $\cG$ over $\T$, i.e.\ the kernel of the $\T$-algebra homomorphism
 \begin{equation} \label{e:augdef}
  \T[\cG] \lra \T, \qquad \sum a_g[g] \mapsto \sum a_g. \end{equation}
 It is then well-known that we have a $\T$-module isomorphism
 \[ \Delta/\Delta^2 \cong \cG^{\ab} \otimes \T, \qquad \sum a_g[g] \mapsto \sum g \otimes a_g. \]
 In order to apply the assumptions of Theorem~\ref{t:v3}, we must invoke the continuous group representation $\rho \colon \cG \rightarrow \GL_2(K)$.  Note that $\rho$
 can be extended to a $\T$-algebra homomorphism (also denoted $\rho$) 
 \[ \T[\cG] \lra M_2(K). \] 
 We then define \[ M = \rho(\Delta)/\rho(\Delta^2), \]
 so there is a canonical surjection
 \[ \begin{tikzcd} \cG^{\ab} \otimes \T \cong \Delta/\Delta^2 \ar[r, "\rho", two heads] & M.  \end{tikzcd} \]
 As explained above, this homomorphism represents a surjective cohomology class \[ \kappa \in H^1(\cG, M). \]  It remains to prove that $\Fitt_{\T}(M) \subset I$, and this will take up the remainder of the paper.
 
 \subsection{Explication of Fitting Ideal} \label{s:efitt}
 
 Since $\cG$ is compact and $\rho$ is continuous, the image $\rho(\cG)$ is a compact subset of $M_2(K)$, and hence $\fm$-adically bounded.  The same is therefore true of $\rho(\T[\cG])$ and $\rho(\Delta)$.  It follows that $\rho(\Delta)$ is a finitely generated $\T$-module, and hence that $M = \rho(\Delta)/\rho(\Delta^2)$ is finitely generated as well.
 
 Let $\rho_1, \dots, \rho_r$ denote a set of $\T$-module generators for $\rho(\Delta)$, where $\rho_i = \rho(g_i - 1)$ for elements $g_i \in \cG$.
 The images of the $\rho_i$ in $M$ are $\T$-module generators, and there are two types of relations for these generators in $M$:
\begin{itemize}
\item We may have relations 
\begin{equation} \sum_{i=1}^{r} \epsilon_i \rho_i =0 \text{ in } M_2(K) \label{e:epsdef}
\end{equation} 
for constants $\epsilon_i \in \T$.  We call these relations of $\epsilon$-type.
\item For each pair $1 \le i, j \le r$, we may write 
\begin{equation}
 \rho_i \rho_j = \sum_{k=1}^{r} \delta_{ijk} \rho_k \label{e:deltadef}
 \end{equation}
for constants $\delta_{ijk} \in \T$.  These expressions vanish in $M$; we call these relations of $\delta$-type.
\end{itemize}

The Fitting ideal of $M$ is the ideal generated by all determinants $\det(D)$ where $D$ is an $r \times r$ matrix whose rows have the form 
$(\epsilon_1, \dotsc, \epsilon_r)$ for relations of $\epsilon$-type  or $(\delta_{ij1}, \delta_{ij2}, \dots, \delta_{ijr})$ for relations of $\delta$-type.
We need to show that $\det(D) \in I$ for each such matrix $D$.

\subsection{Traces and Determinants}

In order to prove that $\det(D) \in I$, we need to generate expressions involving the $\rho_i$ that are known to lie in $I$.  This is given by the following.

\begin{lemma}  For each $A \in \rho(\Delta)$, we have $\tr(A) \in I$ and $\det(A) \in I$. \label{l:tdi}
\end{lemma}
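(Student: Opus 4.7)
My plan is to deduce both statements from the characteristic polynomial hypothesis of Theorem~\ref{t:v3}, which (after specializing to $\chi_1 = \chi_2 = 1$) asserts
\[
\tr(\rho(g)) \equiv 2 \pmod{I}, \qquad \det(\rho(g)) \equiv 1 \pmod{I}
\]
for every $g \in \cG$. Throughout, $\rho$ extends to a $\T$-algebra homomorphism $\T[\cG] \to M_2(K)$, and $\rho(\Delta)$ is generated as a $\T$-module by the matrices $\rho(g - 1) = \rho(g) - 1$ for $g \in \cG$.

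For the trace, write an arbitrary $A \in \rho(\Delta)$ as $A = \sum_i a_i \rho(g_i - 1)$ with $a_i \in \T$ and $g_i \in \cG$. By $\T$-linearity of trace, $\tr(A) = \sum_i a_i\bigl(\tr(\rho(g_i)) - 2\bigr)$, and each summand already lies in $I$. So the trace statement is essentially immediate.

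For the determinant I will assemble three ingredients. First, the base-case identity $\det(M - 1) = \det(M) - \tr(M) + 1$ for $2 \times 2$ matrices, which applied to $M = \rho(g)$ gives $\det(\rho(g - 1)) \equiv 1 - 2 + 1 = 0 \pmod{I}$; together with the homogeneity $\det(aX) = a^2 \det(X)$, this disposes of a single summand $a\,\rho(g - 1)$. Second, the polarization identity for $2 \times 2$ matrices,
\[
\det(X+Y) = \det(X) + \det(Y) + \tr(X)\tr(Y) - \tr(XY),
\]
a direct algebraic check. Third, since $\rho$ is a ring homomorphism and $\Delta$ is an ideal of $\T[\cG]$, we have $\rho(\Delta)\cdot\rho(\Delta) \subseteq \rho(\Delta^2) \subseteq \rho(\Delta)$. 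With these in hand, I induct on the number of summands in the expansion of $A$. The inductive step reduces $\det(X + Y)$ modulo $I$ to $\det(X) + \det(Y) + \tr(X)\tr(Y) - \tr(XY)$; the first two terms lie in $I$ by induction, the product $\tr(X)\tr(Y)$ lies in $I^2 \subseteq I$ by the trace part, and $\tr(XY) \in I$ because $XY \in \rho(\Delta)$ by the third ingredient, so the trace part applies once more.

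The one pitfall worth flagging is the tempting shortcut $2\det(M) = \tr(M)^2 - \tr(M^2)$ coming from Cayley--Hamilton: this requires inverting $2$, which is not allowed in the generality of $\T$ considered here---indeed Theorem~\ref{t:v3} is designed precisely to accommodate residue characteristic $2$. The integral polarization identity above sidesteps the division cleanly, and beyond this the lemma is a purely formal consequence of Cayley--Hamilton combined with the fact that $\Delta$ is an ideal of $\T[\cG]$.
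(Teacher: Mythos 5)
Your proof is correct and takes essentially the same route as the paper: the paper also starts from the trace and determinant congruences on $\cG$ coming from~(\ref{e:charcong3}) and then says ``a simple argument shows that these congruences extend to all $g \in \T[\cG]$,'' citing \cite{ow}*{Lemma~3.1}, after which the vanishing of $\chi$ on $\Delta$ finishes the job. What you have done is carry out that extension step explicitly (restricting directly to $\Delta$), with the $\T$-linearity of trace handling one half and the integral polarization identity $\det(X+Y)=\det X+\det Y+\tr X\,\tr Y-\tr(XY)$ together with the fact that $\rho(\Delta)$ is closed under multiplication handling the other; your remark about avoiding the division-by-$2$ shortcut is well taken given the $p=2$ motivation of the theorem.
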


\begin{proof}
Let $\chi \colon \T[\cG] \lra \T$ denote the augmentation map defined in (\ref{e:augdef}), i.e.\ the trivial character of $\cG$ extended to a $\T$-algebra homomorphism on $\T[\cG]$. The congruence (\ref{e:charcong3}) implies that 
\[ \tr(\rho(g)) \equiv 2 \chi(g) \pmod{I}, \qquad \det(\rho(g)) \equiv \chi(g) \pmod{I} \]
for $g \in \cG$. A simple argument shows that these congruences extend to all $g \in \T[\cG]$ (see \cite{ow}*{Lemma 3.1}).
By definition, $\chi(g) = 0$ for $g \in \Delta$.  The result follows.
\end{proof}

\subsection{An Altered Matrix}

Write \[ \rho_i = \mat{a_i}{b_i}{c_i}{d_i}. \]
Given an $r \times r$ matrix $D$ of $\epsilon$-type and $\delta$-type relations as in \S\ref{s:efitt}, we define an associated matrix $D'$ obtained by altering the rows of $D$ as follows. 

\begin{itemize}
\item No change for $\epsilon$-type rows.
\item For rows of $\delta$-type, replace $\delta_{ijj}$ by $\delta_{ijj} - a_i$, replace $\delta_{iji}$ by $\delta_{iji} - d_j$, and leave the other $\delta_{ijk}$ unchanged.\end{itemize}

\begin{lemma} We have $\det(D') = 0.$ \label{l:detzero}
\end{lemma}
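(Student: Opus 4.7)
The plan is to exhibit a single explicit nonzero kernel vector of $D'$, viewed as a matrix over $K$. Since $\T$ is reduced it injects into $K = \Frac(\T)$, so once we establish $\det(D') = 0$ in $K$ (and, in particular, we need not fuss that the altered entries may lie in $K$ rather than $\T$) we are done. The correct kernel vector is $b = (b_1, b_2, \dots, b_r)^T \in K^r$, the column of upper-right entries of the generators $\rho_i$. The alteration defining $D'$ is engineered precisely so that this vector is killed.

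To verify $D' b = 0$ I inspect the two row types. For an unaltered $\epsilon$-type row $(\epsilon_1, \dots, \epsilon_r)$, the dot product $\sum_i \epsilon_i b_i$ is the $(1,2)$-entry of $\sum_i \epsilon_i \rho_i$, which vanishes by (\ref{e:epsdef}). For an altered $\delta$-type row indexed by $(i,j)$, the dot product with $b$ equals
\[ \sum_k \delta_{ijk} b_k \;-\; d_j b_i \;-\; a_i b_j, \]
which is exactly the $(1,2)$-entry of $\rho_i \rho_j - d_j \rho_i - a_i \rho_j$. A direct calculation gives $(\rho_i \rho_j)_{1,2} = a_i b_j + b_i d_j$, $(a_i \rho_j)_{1,2} = a_i b_j$, and $(d_j \rho_i)_{1,2} = d_j b_i$, so the difference is zero. (In the diagonal case $i=j$, the alteration subtracts both $a_i$ and $d_i$ from position $i$, and the vanishing of the $(1,2)$-entry reduces to Cayley--Hamilton, $\rho_i^2 - (a_i+d_i)\rho_i = -\det(\rho_i)\cdot I$.) Thus $D' b = 0$ in $K^r$.

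It remains to show $\det(D') = 0$ in $K$. Decompose the total ring of fractions as a product of fields $K = k_1 \times \cdots \times k_s$ (using that $\T$ is reduced Noetherian) and project everything to each factor $k_m$; it suffices to check $\det(D'_m) = 0$ in each $k_m$. In each factor, the projection $\rho^{(m)}$ is irreducible by hypothesis. If $b^{(m)} = 0$ in $k_m^r$, then every $\rho_i^{(m)}$ would have vanishing $(1,2)$-entry; since $\rho^{(m)}$ extends to a $k_m$-algebra map on $k_m[\cG]$ and $\rho^{(m)}(g) = I + \rho^{(m)}(g-1)$, every $\rho^{(m)}(g)$ would be lower triangular, rendering $k_m e_2$ a stable line and contradicting irreducibility. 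Hence $b^{(m)}\ne 0$, $D'_m$ has nontrivial kernel, and $\det(D'_m) = 0$, giving $\det(D') = 0$ as claimed. There is no real obstacle to carrying out this plan; the only delicate point is spotting that the asymmetric alteration prescribed in the statement is precisely the one that kills the $(1,2)$-coordinate of $\rho_i\rho_j$.
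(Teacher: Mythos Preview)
Your proof is correct and follows essentially the same route as the paper: you exhibit the column vector $b=(b_1,\dots,b_r)^T$ of upper-right entries as a kernel vector of $D'$ by checking the two row types, then use the field decomposition of $K$ together with irreducibility on each factor to conclude that $b$ has a nonzero projection everywhere, forcing $\det(D')=0$. Your treatment is slightly more detailed (e.g.\ the explicit handling of the diagonal case $i=j$), but the argument is the same.
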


\begin{proof} 
Let 
\[ w = (b_1, b_2, \dots, b_r)^T  \qquad \text{ (column vector). } \]
We claim that $D' w = 0$.  For a row of $\epsilon$-type, the corresponding component of $D'w$ is $\sum_{i=1}^r \epsilon_i b_i$.  This is the upper right coefficient of the matrix  $\sum_{i=1}^r \epsilon_i \rho_i$, and hence vanishes by the definition of the $\epsilon_i$ (see (\ref{e:epsdef})).  Similarly for a row of $\delta$-type, the corresponding component of $D'w$ is \begin{equation}
 \left(\sum_{k=1}^r \delta_{ijk} b_k\right) - (a_i b_j + b_i d_j) = 0. \label{e:deltazero} \end{equation}
The quantity (\ref{e:deltazero}) vanishes because each expression in parentheses equals the upper right coefficient of $\rho_i \rho_j$; this follows on the left by the definition (\ref{e:deltadef}) of the $\delta_{ijk}$, and on the right by the definition of matrix multiplication.

The ring $K = \Frac(\T)$ is a product of fields.  To prove $\det(D') = 0$, it suffices to prove this on each field factor of $K$.  Now, in each field factor of $K$, the projection of some $b_i$ must be nonzero; otherwise the projection of $\rho$ to that field factor would be lower triangular and hence reducible, contrary to assumption.  Therefore the equation $D'w = 0$ implies $\det(D') = 0$ as desired.
\end{proof}

Our goal is to show that $\det(D) \in I$, and we have shown that $\det(D') = 0$.  It therefore suffices to show that 
$\det(D') - \det(D) \in I$.  In other words, the alterations used to go from $D$ to $D'$ are small enough to leave the determinant unchanged modulo $I$. Let us motivate our strategy to prove this with an example.

\subsection{An Example} \label{s:example}

Suppose $r=2$.   We consider a matrix with only rows of $\delta$-type, namely

\[  D = \mat{\delta_{121}}{\delta_{122}}{\delta_{211}}{\delta_{212}}, \qquad \text{ whence } \qquad D' = \mat{\delta_{121} - d_2}{\delta_{122} - a_1}{\delta_{211} - a_2}{\delta_{212} - d_1}. \]
As shorthand, write $t_i = a_i + d_i$ for the trace of $\rho_i$.  We write $d_{12} = c_1b_2 + d_1d_2$ for the ``$d$''-component of $\rho_1\rho_2$, and $t_{12} = a_{12} + d_{12}$ for the trace of $\rho_1 \rho_2$.  By multilinearity of the determinant, we have
\begin{equation} \label{e:dpd}
\det(D') - \det(D) = - \det\mat{d_2}{a_1}{\delta_{211}}{\delta_{212}} - \det \mat{\delta_{121}}{\delta_{122}}{a_2}{d_1}
+ \det \mat{d_2}{a_1 }{a_2}{d_1}. 
\end{equation}
We evaluate the determinants on the right using the substitution $t_i = a_i + d_i$ where convenient.
Then \begin{align}
\det\mat{d_2}{a_1}{\delta_{211}}{\delta_{212}} & = - t_1  \delta_{211} + (d_1 \delta_{211} + d_2 \delta_{212})    \nonumber \\
& = - t_1  \delta_{211} + d_{21} \label{e:usedd} \\
& = - t_1 \delta_{211} + (c_2b_1 + d_1d_2). \label{e:det1}
\end{align}
Note that equation (\ref{e:usedd}) uses the definition of the $\delta_{ijk}$.
Similarly
\begin{align}
\det \mat{\delta_{121}}{\delta_{122}}{a_2}{d_1}  &  = - t_2  \delta_{122} + (d_1 \delta_{121} + d_2 \delta_{122} )    \nonumber\\
& = -t_2 \delta_{122} + d_{12} \nonumber \\
&= - t_2 \delta_{122} + (c_1b_2 + d_1d_2). \label{e:det2}
\end{align}
Combining (\ref{e:dpd}), (\ref{e:det1}), and (\ref{e:det2}), we obtain

\begin{align}
\begin{split}
\det(D') - \det(D) &= t_1 \delta_{211} - (c_2b_1 + d_2d_1)  +    \\
& \ \  \ \  + t_2  \delta_{122} - (c_1b_2 + d_1 d_2 )  +   \\
&  \ \ \ \   +  (d_1d_2 - a_1 a_2)    \nonumber
\end{split} \\
&= t_1 \delta_{211} + t_2  \delta_{122} - t_{12}. \label{e:example}
 \end{align}
By Lemma~\ref{l:tdi}, the expression (\ref{e:example}) lies in $I$ as desired.

\section{Formal Variables} \label{s:formal}

We do not know how to establish {\em explicit} formulae such as (\ref{e:example})  to prove that
 \[ \det(D') - \det(D) \in I \] in general.  
Instead, we will prove {\em abstractly} the existence of polynomial identities such as (\ref{e:example}) that express the difference $\det(D') - \det(D)$ in terms of traces and determinants of elements of $\rho(\Delta)$.

For this, it is convenient to shift our perspective from working with the ring $\T$ to working with formal polynomial rings.  
We will define a polynomial algebra $R$ and a specialization homomorphism $\pi\colon R \longrightarrow K = \Frac(\T)$.  We will show that the 
identities we need hold in $R/\ker \pi$, so they apply in $\T$ as well by applying $\pi$. The advantage of working in $R$ is that we can identify the subring generated by traces and determinants of matrices mapping to $\rho(\Delta)$ under $\pi$ as the subspace of invariants under a certain group action, and use cohomological considerations to show that our element of interest lies in  this subspace.  We now describe this in greater detail.

\subsection{The ring \texorpdfstring{$R$}{R}} \label{s:trr}

Define
\[ R_0 = \Z[\boldsymbol{\epsilon}_{i}, \boldsymbol{\delta}_{ijk}],\]
the commutative polynomial ring in $r^2$ free variables enumerated by the entries of the matrix $D$.  Define
\[ R = R_0[\boldsymbol{a}_i, \boldsymbol{b}_i, \boldsymbol{c}_i, \boldsymbol{d}_i]_{i=1}^r, \]
a commutative polynomial ring in $r^2 + 4r$ free variables.  Define a ring homomorphism $R \lra K$ in the natural way indicated by our variable names, i.e.\ 
\[ \pi(\boldsymbol{\epsilon}_{i}) = \epsilon_i,\ \  \pi(\boldsymbol{\delta}_{ijk}) = \delta_{ijk}, \ \ \pi(\boldsymbol{a}_i) = a_i, \dotsc, \ \  \pi(\boldsymbol{d}_i) = d_i. \]
Note that $\pi(R_0) \subset \T$.

Finally, let $\boldsymbol{D}, \boldsymbol{D}' \in M_{r}(R)$ denote the natural matrices whose images under $\pi$ are equal $D, D'$, respectively, i.e. 
they are defined by making each entry bold.
Our goal is to prove that \[ \pi(\det(\boldsymbol{D}') - \det(\boldsymbol{D})) = \det(D') - \det(D) \in I. \]

\subsection{Relation Ideal}

We now define the polynomial relations that allow us to reduce $\det(\boldsymbol{D}') - \det(\boldsymbol{D})$ to an expression involving traces and determinants, as in the example of \S\ref{s:example}.  
Define \[ \boldsymbol{\rho_i} = \mat{\boldsymbol{a}_i}{\boldsymbol{b}_i}{\boldsymbol{c}_i}{\boldsymbol{d}_i} \in M_2(R). \]
Let $J \subset R$ be the ideal generated by the following:
\begin{itemize}
\item The 4 coefficients of 
\begin{equation} 
\sum_{i=1}^r \boldsymbol{\epsilon}_i \boldsymbol{\rho}_i \label{e:b1}
\end{equation}
 for each row of $\epsilon$-type in $D$.
\item The 4 coefficients of 
\begin{equation} \label{e:b2}
\boldsymbol{\rho}_i \boldsymbol{\rho}_j - \sum_{k = 1}^r \boldsymbol{\delta}_{ijk} \boldsymbol{\rho}_k
\end{equation} for each row of $\delta$-type in $D$.
\end{itemize}

It is clear that $J \subset \ker(\pi)$.

\subsection{Subring of traces and determinants} \label{s:subring}

Let $A \subset R$ denote the $R_0$-subalgebra
generated by the traces and determinants of all matrices in the noncommutative $\Z$-algebra generated by the matrices $\boldsymbol{\rho}_i$.
Denote by $\overline{A}$ the image of $A$ in $R/J$. We will show that in order to deduce our desired result  $\pi(\det(\boldsymbol{D}') - \det(\boldsymbol{D})) \in I$, 
it suffices to prove that the image of $\det(\boldsymbol{D}') - \det(\boldsymbol{D})$ in $R/J$ lies in $\overline{A}$.  
For this, it is important that $\det(\boldsymbol{D}') - \det(\boldsymbol{D})$ lies in the following ideal of $R$:
\[ I_R = \langle \boldsymbol{a}_i,   \boldsymbol{b}_i,  \boldsymbol{c}_i,  \boldsymbol{d}_i   \rangle. \]
\begin{lemma} \label{l:dir}
We have $\det(\boldsymbol{D}') - \det(\boldsymbol{D}) \in I_R.$
\end{lemma}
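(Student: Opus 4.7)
The plan is to exploit multilinearity of the determinant in rows, using the fact that $\boldsymbol{D}$ and $\boldsymbol{D}'$ differ in a very controlled way—every entry that changes is replaced by subtracting a generator of $I_R$.

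First I would observe that every entry of the matrix $\boldsymbol{D}' - \boldsymbol{D}$ lies in $I_R$. Indeed, by the recipe defining $D'$ in \S\ref{s:efitt}, the $\epsilon$-type rows of $\boldsymbol{D}$ and $\boldsymbol{D}'$ coincide, while in each $\delta$-type row labelled by a pair $(i,j)$ the only altered entries are $\boldsymbol{\delta}_{ijj} \mapsto \boldsymbol{\delta}_{ijj} - \boldsymbol{a}_i$ and $\boldsymbol{\delta}_{iji} \mapsto \boldsymbol{\delta}_{iji} - \boldsymbol{d}_j$. Hence each entry of $\boldsymbol{D}' - \boldsymbol{D}$ is either $0$, $-\boldsymbol{a}_i$, or $-\boldsymbol{d}_j$, all visibly in $I_R = \langle \boldsymbol{a}_i, \boldsymbol{b}_i, \boldsymbol{c}_i, \boldsymbol{d}_i\rangle$.

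Next I would expand the determinant row-by-row. Writing the $k$-th row of $\boldsymbol{D}'$ as the sum of the $k$-th row of $\boldsymbol{D}$ and the $k$-th row of $\boldsymbol{D}' - \boldsymbol{D}$, multilinearity of the determinant gives
\[
\det(\boldsymbol{D}') \;=\; \sum_{S \subseteq \{1,\dots,r\}} \det(\boldsymbol{M}_S),
\]
where $\boldsymbol{M}_S$ has $k$-th row taken from $\boldsymbol{D}' - \boldsymbol{D}$ if $k \in S$ and from $\boldsymbol{D}$ if $k \notin S$. The term $S = \emptyset$ contributes exactly $\det(\boldsymbol{D})$, so
\[
\det(\boldsymbol{D}') - \det(\boldsymbol{D}) \;=\; \sum_{\emptyset \neq S \subseteq \{1,\dots,r\}} \det(\boldsymbol{M}_S).
\]

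Finally, for each nonempty $S$ I would pick any $k \in S$ and perform a Laplace expansion of $\det(\boldsymbol{M}_S)$ along the $k$-th row. Every entry of that row lies in $I_R$ by the first step, so each term of the expansion is an $I_R$-multiple of an $(r-1) \times (r-1)$ minor; thus $\det(\boldsymbol{M}_S) \in I_R$. Summing over $S$ yields $\det(\boldsymbol{D}') - \det(\boldsymbol{D}) \in I_R$. The argument is essentially bookkeeping once multilinearity is set up; I do not anticipate a genuine obstacle, because the lemma is merely recording that the alterations used to pass from $\boldsymbol{D}$ to $\boldsymbol{D}'$ are ``small'' in the ideal-theoretic sense, which the explicit description of those alterations makes manifest.
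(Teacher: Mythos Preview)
Your proposal is correct and follows exactly the approach in the paper: the paper's proof simply notes that every entry of $\boldsymbol{D}' - \boldsymbol{D}$ lies in $I_R$ and invokes multilinearity of the determinant. You have spelled out the multilinear expansion and Laplace step explicitly, but the underlying argument is identical.
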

\begin{proof} This follows immediately from multilinearity of the determinant since every entry of 
$\boldsymbol{D}' - \boldsymbol{D}$ lies in $I_R$.
\end{proof}

\begin{lemma} We have $\pi(A \cap I_R) \subset I$. 
\label{l:air}
\end{lemma}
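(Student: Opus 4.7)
My plan is to exploit the fact that $I_R$ is precisely the kernel of the $R_0$-algebra surjection $R \twoheadrightarrow R_0$ that kills every $\boldsymbol{a}_i, \boldsymbol{b}_i, \boldsymbol{c}_i, \boldsymbol{d}_i$. Since every $\boldsymbol{\rho}_i$ has all four entries in $I_R$, any $M$ in the noncommutative $\Z$-subalgebra $S \subset M_2(R)$ generated by the $\boldsymbol{\rho}_i$ splits uniquely as $M = c_M \cdot \Id + M'$, where $c_M \in \Z$ is the constant (scalar) term of the noncommutative polynomial expressing $M$, and $M'$ is a $\Z$-linear combination of nonempty products of the $\boldsymbol{\rho}_i$, all of whose matrix entries lie in $I_R$. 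In particular, each generator of $A$ over $R_0$ satisfies
\[ \tr(M) - 2c_M \;=\; \tr(M') \in I_R, \qquad \det(M) - c_M^2 \;=\; c_M \tr(M') + \det(M') \in I_R. \]

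First I would use this to produce a normal form for elements of $A \cap I_R$. Given any $f \in A$, write $f = Q\bigl(\tr(M_1), \ldots, \tr(M_n), \det(N_1), \ldots, \det(N_m)\bigr)$ for some $M_j, N_j \in S$ and some polynomial $Q$ with coefficients in $R_0$. Substituting $\tr(M_j) = 2c_{M_j} + (\tr(M_j) - 2c_{M_j})$ and similarly for the $\det(N_j)$'s and expanding, I obtain a decomposition $f = r_0 + g$, where $r_0 \in R_0$ and $g$ is an $R_0$-linear combination of products of the $I_R$-valued quantities $\tr(M_j) - 2c_{M_j}$ and $\det(N_j) - c_{N_j}^2$ of positive total degree. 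In particular $g \in I_R$. Because $R_0$ and $I_R$ are defined using disjoint sets of algebraically independent variables, we have $R_0 \cap I_R = 0$, so if $f \in I_R$ then $r_0 = f - g = 0$ and hence $f = g$.

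Next I would apply $\pi$ termwise. For each $M_j \in S$, the image $\pi(M_j')$ is a $\Z$-linear combination of nonempty products $\rho_{i_1} \cdots \rho_{i_k} = \rho\bigl((g_{i_1}-1) \cdots (g_{i_k}-1)\bigr)$, and since $\Delta \subset \T[\cG]$ is a two-sided ideal, each such product lies in $\rho(\Delta)$; thus $\pi(M_j') \in \rho(\Delta)$. Lemma~\ref{l:tdi} then gives
\[ \pi\bigl(\tr(M_j) - 2c_{M_j}\bigr) \;=\; \tr(\pi(M_j')) \in I, \]
and analogously $\pi(\det(N_j) - c_{N_j}^2) = c_{N_j}\tr(\pi(N_j')) + \det(\pi(N_j')) \in I$. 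Since $\pi(R_0) \subset \T$, applying $\pi$ to the expression for $g$ exhibits $\pi(f) = \pi(g)$ as a $\T$-linear combination of products of elements of $I$, so $\pi(f) \in I$ as required.

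The main obstacle is the initial bookkeeping of separating the ``scalar'' contribution $c_M \Id$ from the ``augmentation'' part $M' \in \rho(\Delta)$-lift inside $S$; once that splitting is in place, and the triviality $R_0 \cap I_R = 0$ is noted, the rest is a formal expansion and a direct appeal to Lemma~\ref{l:tdi}.
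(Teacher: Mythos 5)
Your proposal is correct and follows essentially the same strategy as the paper's proof: separate out the scalar (constant-term) contribution of each noncommutative polynomial, observe that the remaining part has entries in $I_R$ and lands in $\rho(\Delta)$ under $\pi$ so that Lemma~\ref{l:tdi} applies, and use $R_0 \cap I_R = 0$ to kill the residual $R_0$-constant. You have simply made explicit the bookkeeping (the expansion of $Q$ into a constant part $r_0$ plus an $I_R$-valued part $g$, together with the $2\times 2$ identity $\det(c\,\Id + M') = c^2 + c\tr(M') + \det(M')$) that the paper compresses into the phrase ``from these considerations.''
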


\begin{proof} Let $f \in \Z\langle X_1, \dotsc, X_r \rangle$ be an  element of the free polynomial algebra in $r$ noncommuting variables.
If $f$ has no constant term, then 
\[ \pi(f(\boldsymbol{\rho}_1, \dotsc,  \boldsymbol{\rho}_r)) = 
f(\rho_1, \dotsc, \rho_r) \in \rho(\Delta), \]
 and hence the trace and determinant of this element lies in $I$ by Lemma~\ref{l:tdi}.
The element $f(\boldsymbol{\rho}_1, \dotsc,  \boldsymbol{\rho}_r)$ clearly has coefficients lying in $I_R$. From these considerations, to prove the lemma it suffices to prove that
$\pi(R_0 \cap I_R) \subset I$.
  In fact, it is clear that $R_0 \cap I_R = 0$, as
  \[ R / I_R \cong R_0, \] 
  with $\boldsymbol{a}_i, \boldsymbol{b}_i, \boldsymbol{c}_i, \boldsymbol{d}_i \mapsto
  0$.    This concludes the proof.
\end{proof}

We summarize the result of this section: in order to prove the desired result 
 \begin{equation} \label{e:ddi}
 \det(D') - \det(D) \in I, 
 \end{equation}
  it suffices to prove that the image of 
$\det(\boldsymbol{D}') - \det(\boldsymbol{D})$ in $R/J$ lies in the subring $\overline{A}$, the image of $A$ in $R/J$.
Indeed, if this condition holds, then then \begin{equation} \label{e:daj}
\det(\boldsymbol{D}') - \det(\boldsymbol{D}) = a + j \end{equation} for some $a \in A, j \in J$.  Since $J \subset I_R$, we have
$a \in I_R$ by Lemma~\ref{l:dir} and hence $\pi(a) \in I$ by Lemma~\ref{l:air}.  Since $\pi(j) = 0$, the desired result (\ref{e:ddi}) follows from
applying $\pi$ to (\ref{e:daj}).

\section{Matrix Invariant Theory and Rational Cohomology}

The advantage of passing to the ring of formal variables $R$ (rather than working directly with $\T$ and $K$) is that we may identify the subring $A \subset R$ as the subspace invariant under a group action, rather than needing to write down explicit formulae.  To this end, we have the following important classical result, called the fundamental theorem of matrix invariant theory.

\begin{theorem}\label{t:invariants}
Endow the ring $\Z[\boldsymbol{a}_i, \boldsymbol{b}_i, \boldsymbol{c}_i, \boldsymbol{d}_i]_{i=1}^{r}$, with an action of $\GL_2(\Z)$ defined by simultaneous conjugation on the matrices $\boldsymbol{\rho}_i = \mat{\boldsymbol{a}_i}{\boldsymbol{b}_i}{\boldsymbol{c}_i}{\boldsymbol{d}_i}$. 
The subring of invariant elements is generated over $\Z$ by the traces and determinants of all matrices in the noncommutative algebra generated by the 
$\boldsymbol{\rho}_i$.
\end{theorem}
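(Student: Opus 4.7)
My plan is to invoke the First Fundamental Theorem of Matrix Invariant Theory for $n=2$, proving the statement first over $\Q$ by using linear reductivity and then descending to $\Z$. The characteristic-zero result is classical---due to Sibirskii in the $2 \times 2$ case, and to Procesi and Razmyslov more generally---while the integral refinement goes back to De Concini and Procesi.

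I would dispatch one inclusion immediately. Simultaneous conjugation by $g \in \GL_2(\Z)$ sends $\boldsymbol{\rho}_i$ to $g \boldsymbol{\rho}_i g^{-1}$, preserves sums and products of matrices, and fixes $\tr$ and $\det$ of any matrix. Hence every trace or determinant of a noncommutative polynomial in the $\boldsymbol{\rho}_i$ is $\GL_2(\Z)$-invariant, so the subring generated by such traces and determinants is contained in the invariant subring.

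For the converse I would first rationalize. Setting $R' = \Z[\boldsymbol{a}_i, \boldsymbol{b}_i, \boldsymbol{c}_i, \boldsymbol{d}_i]_{i=1}^r$ and $R'_\Q = R' \otimes_\Z \Q$, the Zariski density of $\GL_2(\Z)$ in the algebraic group $\GL_{2,\Q}$ gives the identification $(R')^{\GL_2(\Z)} = R' \cap (R'_\Q)^{\GL_{2,\Q}}$. Over $\Q$, the group $\GL_2$ is linearly reductive, and the classical First Fundamental Theorem---via Weyl's polarization together with Schur-Weyl duality---realizes $(R'_\Q)^{\GL_{2,\Q}}$ as the $\Q$-algebra generated by the traces $\tr(\boldsymbol{\rho}_{i_1} \boldsymbol{\rho}_{i_2} \cdots \boldsymbol{\rho}_{i_k})$ of all monomials in the generic matrices. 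For $n=2$, the Cayley-Hamilton identity $\det(\boldsymbol{\rho}_i) = \frac{1}{2}\bigl(\tr(\boldsymbol{\rho}_i)^2 - \tr(\boldsymbol{\rho}_i^2)\bigr)$ makes the determinants redundant over $\Q$, so the traces of monomials alone suffice at this stage.

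The hard part will be the descent from $\Q$ to $\Z$: over $\Z$ the group $\GL_2$ is not linearly reductive, no Reynolds operator is available to project onto invariants, and the Cayley-Hamilton expression for $\det$ visibly involves a denominator. I would invoke the integral form of the first fundamental theorem of De Concini and Procesi, which exhibits an explicit $\Z$-basis for the integral invariant ring---indexed by combinatorial data such as standard Young tableaux---together with straightening relations writing any product of trace generators as a $\Z$-linear combination of basis elements. Granting this, every element of $(R'_\Q)^{\GL_{2,\Q}}$ that happens to lie in $R'$ is automatically a $\Z$-polynomial in the traces. Since each $\det(\boldsymbol{\rho}_i)$ is itself an honest integer polynomial in the entries (independent of the Cayley-Hamilton denominator), the subring $A$ defined in the theorem already contains every needed trace and determinant; combining this with the De Concini-Procesi integrality gives the reverse inclusion and completes the proof.
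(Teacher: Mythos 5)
The paper does not actually prove Theorem~\ref{t:invariants}; it records it as ``the fundamental theorem of matrix invariant theory'' and leans on the literature, with De Concini--Procesi~\cite{cp} in the bibliography serving as the implicit citation. Your sketch correctly identifies that literature and correctly isolates the two-stage structure (first fundamental theorem over $\Q$, then an integral refinement), so the overall strategy aligns with the paper's. However, two steps as you wrote them are wrong and need repair before the sketch is a proof.

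First, $\GL_2(\Z)$ is \emph{not} Zariski dense in $\GL_{2,\Q}$: the determinant maps $\GL_2(\Z)$ onto $\{\pm 1\}$, so its Zariski closure is $\{g : \det(g)^2 = 1\}$, an index-two extension of $\SL_2$, not all of $\GL_2$. The identity $(R')^{\GL_2(\Z)} = R' \cap (R'_\Q)^{\GL_{2,\Q}}$ is still true, but for a different reason: the conjugation action factors through $\mathrm{PGL}_2$, and $\SL_2(\Z) \subset \GL_2(\Z)$ \emph{is} Zariski dense in $\SL_{2,\Q}$ (it is generated by unipotents), and $\SL_2 \to \mathrm{PGL}_2$ is dominant, so $(R'_\Q)^{\GL_2(\Z)} = (R'_\Q)^{\SL_{2,\Q}} = (R'_\Q)^{\GL_{2,\Q}}$. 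You need this $\mathrm{PGL}_2$ detour, not a density claim for $\GL_2(\Z)$ itself.

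Second, the assertion that ``every element of $(R'_\Q)^{\GL_{2,\Q}}$ that happens to lie in $R'$ is automatically a $\Z$-polynomial in the traces'' is false, and the sentence following it about $\det(\boldsymbol{\rho}_i)$ does not salvage it but rather contradicts it. Indeed $\det(\boldsymbol{\rho}_1) = \boldsymbol{a}_1\boldsymbol{d}_1 - \boldsymbol{b}_1\boldsymbol{c}_1$ lies in $R'$ and is invariant, yet its only trace expression is $\tfrac{1}{2}\bigl(\tr(\boldsymbol{\rho}_1)^2 - \tr(\boldsymbol{\rho}_1^2)\bigr)$, and the factor $\tfrac12$ cannot be cleared (reduce modulo $2$). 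The correct integral statement---Donkin, De Concini--Procesi~\cite{cp}, Zubkov---is that the invariant ring over $\Z$ is generated by the \emph{characteristic-polynomial coefficients} of monomials in the generic matrices, which for $n = 2$ means traces \emph{and} determinants as independent generators. Once you quote the integral first fundamental theorem in that form, the determinants are supplied by the theorem itself rather than patched in by hand, the circular-looking final sentence disappears, and the rest of your argument closes cleanly.
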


With notation as in \S\ref{s:trr}, we have the following corollary.

\begin{corollary}  \label{c:inv} Endow the ring $R$  with an action of $\GL_2(\Z)$ defined by simultaneous conjugation on the matrices $\rho_i = \mat{\boldsymbol{a}_i}{\boldsymbol{b}_i}{\boldsymbol{c}_i}{\boldsymbol{d}_i}$.  The subring of invariant elements is equal to $A$.
\end{corollary}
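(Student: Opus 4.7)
The plan is to deduce the corollary from Theorem~\ref{t:invariants} essentially by a clean factorization of the ring $R$ relative to the group action. The $\GL_2(\Z)$-action on $R$ is trivial on the variables $\boldsymbol{\epsilon}_{i}$ and $\boldsymbol{\delta}_{ijk}$ generating $R_0$, and acts on the variables $\boldsymbol{a}_i, \boldsymbol{b}_i, \boldsymbol{c}_i, \boldsymbol{d}_i$ via simultaneous conjugation on the $\boldsymbol{\rho}_i$. Setting $S = \Z[\boldsymbol{a}_i, \boldsymbol{b}_i, \boldsymbol{c}_i, \boldsymbol{d}_i]_{i=1}^{r}$, we therefore have a $\GL_2(\Z)$-equivariant identification $R = R_0 \otimes_\Z S$ in which the action is trivial on the left tensor factor and is precisely the action of Theorem~\ref{t:invariants} on the right.

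The key step is to establish the factorization of invariants
\[
R^{\GL_2(\Z)} = R_0 \otimes_\Z S^{\GL_2(\Z)}.
\]
To see this, fix a $\Z$-basis $\{m_\alpha\}$ for $R_0$ consisting of monomials in the $\boldsymbol{\epsilon}_i$ and $\boldsymbol{\delta}_{ijk}$. Then $R$ is a free $S$-module with basis $\{m_\alpha\}$, so every element has a unique expression $\sum_\alpha m_\alpha c_\alpha$ with $c_\alpha \in S$, almost all zero. Because $\GL_2(\Z)$ fixes each $m_\alpha$, we have $g\cdot \sum_\alpha m_\alpha c_\alpha = \sum_\alpha m_\alpha (g\cdot c_\alpha)$, and uniqueness of the expansion implies that the sum is invariant if and only if each $c_\alpha$ is individually invariant. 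This gives the displayed factorization.

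Having obtained this factorization, I would invoke Theorem~\ref{t:invariants} to identify $S^{\GL_2(\Z)}$ with the $\Z$-subalgebra of $S$ generated by the traces and determinants of all matrices in the noncommutative $\Z$-algebra generated by the $\boldsymbol{\rho}_i$. Since $A$ is by definition the $R_0$-subalgebra of $R$ generated by precisely these same traces and determinants, the identity $R^{\GL_2(\Z)} = R_0 \cdot S^{\GL_2(\Z)} = A$ follows at once. The only mildly subtle point in the whole argument is the bookkeeping observation that extending scalars from $\Z$ to $R_0$ preserves invariants (when $\GL_2(\Z)$ acts trivially on $R_0$), which is handled by the basis argument above; the substantive content of the corollary lies entirely in the classical Theorem~\ref{t:invariants}.
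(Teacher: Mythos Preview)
Your proof is correct and follows essentially the same approach as the paper: factor $R = R_0 \otimes_\Z S$ with $S = \Z[\boldsymbol{a}_i,\boldsymbol{b}_i,\boldsymbol{c}_i,\boldsymbol{d}_i]$, observe that invariants commute with this tensor product, and apply Theorem~\ref{t:invariants}. The only difference is cosmetic: the paper dispatches the factorization of invariants in one phrase (``since $R_0$ is $\Z$-flat''), whereas you spell it out via the monomial basis of $R_0$, which is really the same observation made explicit.
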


\begin{proof}  We may write $R = \Z[\boldsymbol{a}_i, \boldsymbol{b}_i, \boldsymbol{c}_i, \boldsymbol{d}_i]_{i=1}^{r} \otimes R_0$, where $\GL_2(\Z)$ acts trivially on $R_0$.  Since $R_0$ is $\Z$-flat, the result follows immediately from Theorem~\ref{t:invariants}.
\end{proof}

In our computations, it will be convenient to work with the Borel subgroup of $\GL_2$ consisting of lower triangular matrices.  We would like restriction to the Borel to induce an isomorphism on cohomology.  This is not true in general for ordinary group cohomology, so for this reason, we must work with the cohomology of algebraic groups called {\em rational cohomology}.  Here ``rational'' refers to actions defined by rational functions, rather than the rational numbers; throughout, we work integrally over $\Z$.

\subsection{Rational cohomology}

For simplicity, we will describe algebraic groups and their cohomology through their ``functor of points'' rather than via group schemes.
We therefore view the algebraic group $\G = \GL_2$ as the functor that associates to any commutative ring $S$ the group $\GL_2(S)$.

\begin{definition}  A {\em rational $\G$-module} is a $\Z$-module $V$ endowed with a functorial action, for every ring $S$, of the group $\GL_2(S)$ on $V \otimes S$, such that the action of a matrix $\mat{a}{b}{c}{d}$ is given by rational functions in the variables $a, b, c, d$.
  We say that $V$ is a rational $\G$-module over a commutative ring $R_0$ if $V$ has a structure of $R_0$-module that commutes with the $\G$-action.
\end{definition}

\begin{example} \label{e:adjoint}
 We denote by $\cA = \Z A \oplus \Z B \oplus \Z C \oplus \Z D$ the $\G$-module given by the adjoint representation, i.e. if $g = \mat{a}{b}{c}{d}$, then
\[ \mat{g \cdot A}{g \cdot B}{g \cdot C}{g \cdot D} = \mat{a}{b}{c}{d}^{-1} \mat{A}{B}{C}{D}  \mat{a}{b}{c}{d}. \]

Let $\B \subset \G$ denote the algebraic subgroup of lower triangular matrices.
Note that for $g = \mat{x}{0}{y}{z}$ in $\B$, we have \[ g \cdot A = A + \frac{y}{x} B, \qquad g \cdot B = \frac{z}{x} B, \qquad g \cdot D = D  -\frac{y}{x} B. \] In particular, $ \Z B$ is a  $\B$-submodule of $\cA$.
\end{example}

\begin{definition}[\cite{jantzen} Ch. 4]  If $V$ is a $\G$-module, define the invariants
\begin{align*} H^0(\G, V) = V^{\G} = \{ v \in V \colon \text{for all commutative rings $A$ and $g \in \G(A)$, } g \cdot v = v\}.  \end{align*}
The rational cohomology groups $H^i(\G, -)$ are the right derived functors of $H^0(\G, -)$.
\end{definition}

The same definitions hold with $\G$ replaced by $\B$.
The following is our motivation for using rational cohomology. 

\begin{theorem} \label{t:bg} Let $V$ be a $\G$-module.  The restriction map \[ H^i(\G, V) \rightarrow H^i(\B, V) \] is an isomorphism for all $i \ge 0$.
\end{theorem}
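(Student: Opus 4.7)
The plan is to realize restriction as the edge homomorphism of a Grothendieck spectral sequence that degenerates thanks to the geometry of the flag variety $\G/\B$. Let $\Ind = \Ind_\B^\G$ denote the induction functor from rational $\B$-modules to rational $\G$-modules, which is right adjoint to restriction. The key structural fact is that the derived functors $R^i \Ind$ compute $\G$-equivariant quasicoherent sheaf cohomology $H^i(\G/\B, -)$ on the homogeneous space $\G/\B$ (see \cite{jantzen}, I.5).

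The essential geometric input is that $\G/\B \cong \PP^1_{\Z}$, so by a standard \v{C}ech computation (valid integrally over $\Z$, not just over a field) one has $H^0(\G/\B, \cO_{\G/\B}) = \Z$ and $H^i(\G/\B, \cO_{\G/\B}) = 0$ for $i > 0$. Combined with the tensor identity $R^i \Ind(V|_\B) \cong V \otimes_\Z R^i \Ind(\Z)$ (see \cite{jantzen}, I.3), this yields the crucial vanishing: $\Ind(V|_\B) \cong V$ and $R^i \Ind(V|_\B) = 0$ for all $i > 0$ and every rational $\G$-module $V$.

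I would then apply the Grothendieck spectral sequence coming from the factorization $H^0(\B, -) = H^0(\G, -) \circ \Ind$:
\[ E_2^{i,j} = H^i(\G, R^j \Ind(V|_\B)) \Longrightarrow H^{i+j}(\B, V). \]
The vanishing above collapses this spectral sequence onto the row $j = 0$, giving $H^i(\G, V) \cong H^i(\B, V)$ for all $i \geq 0$. A direct unwinding confirms that the resulting edge homomorphism coincides with restriction (the unit of adjunction $V \to \Ind(V|_\B)$ has become an isomorphism, so the identifications are canonical). The main technical obstacle is justifying the sheaf-theoretic interpretation of $R^i \Ind$ and the tensor identity integrally over $\Z$, rather than only over a field; both are set up carefully in \cite{jantzen}, so I would invoke those results rather than rederive them here.
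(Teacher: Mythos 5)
Your proof is correct, and it is essentially the standard Cline--Parshall--Scott / Jantzen argument that \cite{jjm}*{Theorem 4.4} (cited by the paper for the general case) is built on; the key observations---right adjointness of $\Ind_\B^\G$ to restriction, the Grothendieck spectral sequence, the tensor identity, and the vanishing of $H^{>0}(\G/\B,\cO_{\G/\B}) = H^{>0}(\PP^1_\Z,\cO)$---all go through integrally. What the paper actually writes out, however, is a different and more elementary argument, and only for the case it needs, namely $i=0$ with $V$ a finitely generated $\Z$-module. There, injectivity is immediate, and for surjectivity the paper observes that a $\B$-invariant vector $x \in V$ defines, for every ring $S$, a map $\G(S) \to V \otimes S$, $g \mapsto g\cdot x$, which factors through $\G(S)/\B(S) = \PP^1(S)$ and therefore yields a morphism of schemes $\PP^1 \to \boldsymbol{V}$ into an affine scheme; since $\PP^1$ is proper and connected, such a morphism is constant, forcing $x$ to be $\G$-invariant. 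Your spectral sequence proof buys the full statement for all $i \ge 0$ and arbitrary $\G$-modules, and it makes the mechanism (geometry of $\G/\B$ plus cohomology of $\cO_{\PP^1}$) explicit; the paper's argument buys brevity and self-containment for the one degree it uses, at the cost of requiring $V$ to be finitely generated over $\Z$ (so that the functor $S \mapsto V \otimes S$ is representable by an affine scheme) and of not addressing higher cohomology at all. Both proofs hinge on the same geometric fact---$\G/\B \cong \PP^1$ is proper---but deploy it through different formalisms.
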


\begin{proof} See \cite{jjm}*{Theorem 4.4} for the general case.  In this paper we only need the case that $i=0$ and $V$ is a finitely generated $\Z$-module, so we describe the proof in this case. Injectivity is clear, so we need only prove surjectivity.
 Let $\boldsymbol{V}$ denote the spectrum of the tensor algebra of $V^* = \Hom(V, \Z)$.  This is the affine scheme whose points over a ring $S$ are $\boldsymbol{V}(S) = V \otimes S$.

Let $x \in H^0(\B, V)$.  If $S$ is any ring, we can define a map $\G(S) \lra \boldsymbol{V}(S) = V \otimes S$ by $g \mapsto g \cdot x$.  Since $x \in H^0(\B, V)$, this map factors through $G(S)/B(S) = \PP^1(S)$.  This yields a morphism of schemes $\PP^1 \lra \boldsymbol{V}$.  Since  $\boldsymbol{V}$ is affine, this morphism must be constant, i.e.\ 
$x \in H^0(\G, V)$.
\end{proof}

\begin{corollary} \label{c:a} We have $H^0(\B, R) = A$.
\end{corollary}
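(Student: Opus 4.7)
The plan is essentially to chain together the two results immediately preceding this corollary. Take $V = R$, regarded as a rational $\G$-module under the simultaneous conjugation action on the matrices $\boldsymbol{\rho}_i$ (with $\G$ acting trivially on the subring $R_0$). Theorem~\ref{t:bg} with $i = 0$ gives the restriction isomorphism
\[ R^{\G} = H^0(\G, R) \xrightarrow{\sim} H^0(\B, R) = R^{\B}, \]
while Corollary~\ref{c:inv} identifies $R^{\G}$ with $A$. Composing yields $H^0(\B, R) = A$, which is the claim.

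The only point that warrants justification is that $R$ is not finitely generated over $\Z$, whereas the proof of Theorem~\ref{t:bg} sketched in the excerpt was written only for finitely generated $\Z$-modules $V$. This is harmless: the statement of Theorem~\ref{t:bg} is given for arbitrary rational $\G$-modules (with the general proof deferred to \cite{jjm}*{Theorem 4.4}), and moreover one can reduce to the finitely generated case by hand if desired. Concretely, $R$ carries a $\G$-stable grading $R = \bigoplus_n R_n$ by total degree in the matrix-entry variables $\boldsymbol{a}_i, \boldsymbol{b}_i, \boldsymbol{c}_i, \boldsymbol{d}_i$; each $R_n$ is a free $R_0$-module of finite rank on which $\G$ acts rationally, and $R_0$ is itself a filtered union of finitely generated $\Z$-subalgebras with trivial $\G$-action. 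Since taking $\G$-invariants commutes with direct sums and with filtered colimits, one reduces termwise to the finitely generated setting already handled by the sketch.

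There is no genuine obstacle here; the corollary is purely a bookkeeping step, packaging Theorem~\ref{t:bg} and Corollary~\ref{c:inv} together so that the subsequent cohomological analysis may be carried out over the more tractable Borel subgroup $\B$ of lower triangular matrices, as foreshadowed in the remark preceding Theorem~\ref{t:invariants}.
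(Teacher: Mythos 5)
Your argument follows the same two-step outline as the paper's proof: first establish $H^0(\G, R) = A$, then transfer to $\B$ via Theorem~\ref{t:bg} with $i=0$. One imprecision is worth flagging, since the paper takes care to address it. Corollary~\ref{c:inv} is stated for the \emph{discrete} group $\GL_2(\Z)$ acting on $R$; its conclusion is $R^{\GL_2(\Z)} = A$, not $H^0(\G, R) = A$. Rational invariance means invariance under $\G(S)$ for \emph{every} commutative ring $S$, so a priori one only obtains $H^0(\G, R) \subseteq R^{\GL_2(\Z)} = A$, and this containment can be proper in general (for instance, $\mathbf{G}_m$ acting on $\Z$ through $t \mapsto t^2$ has $\G(\Z) = \{\pm 1\}$ acting trivially, yet the rational invariants are $0$). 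To conclude one must separately observe the reverse containment $A \subseteq H^0(\G, R)$, which holds because traces, determinants, and elements of $R_0$ are invariant under conjugation by $\G(S)$ for every $S$. The paper makes exactly this two-inclusion argument; your sentence ``Corollary~\ref{c:inv} identifies $R^{\G}$ with $A$'' collapses it, so strictly speaking the citation does not by itself support the claim. The missing step is trivial to supply, so this is a misattribution rather than a flawed approach. Your supplementary remark about the finitely-generated hypothesis in the sketched proof of Theorem~\ref{t:bg}, together with the graded-then-filtered reduction you give, is a correct and helpful clarification that the paper leaves implicit.
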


\begin{proof}  First note that $H^0(\G, R) = A$. Indeed, the inclusion $H^0(\G, R) \supset A$ is clear. Meanwhile the inclusion $H^0(\G, R) \subset A$ follows from Corollary~\ref{c:inv} and the observation that invariance under $\G$ is stronger than invariance under its group of points $\G(\Z)$.  The corollary then follows from Theorem~\ref{t:bg}.
\end{proof}

We conclude this section by stating a key input we will need from the cohomology theory of algebraic groups. See \cite{jjm}*{\S4.2} for a proof.

\begin{theorem}\label{t:adj} Let $\cA$ be the adjoint representation defined in Example~\ref{e:adjoint}.  For any nonnegative integer $k$, the $\B$-module $\cA^{\otimes k} \otimes R$ is acyclic,
i.e.\ $H^i(\B, \cA^{\otimes k} \otimes R) =0$ for $i \ge 1$.
\end{theorem}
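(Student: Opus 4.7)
The plan is to exploit that $\cA^{\otimes k} \otimes R$ is actually a rational $\G$-module (not merely a $\B$-module), since both $\cA$ and each factor $\Sym(\cA_i)$ making up $R$ carry natural $\G$-actions whose restriction to $\B$ gives the structure in question. First I would reduce to $\G$-cohomology using that $\G/\B \cong \PP^1$: for any $\G$-module $V$ flat over $\Z$, the projection formula gives $\Ind_\B^\G V = V \otimes H^0(\PP^1, \cO_{\PP^1}) = V$ and $R^j \Ind_\B^\G V = V \otimes H^j(\PP^1, \cO_{\PP^1}) = 0$ for $j \ge 1$. The Grothendieck spectral sequence for the composite $(-)^{\G} \circ \Ind_\B^\G = (-)^{\B}$ then collapses to give $H^i(\G, V) \cong H^i(\B, V)$, so it suffices to show $H^i(\G, \cA^{\otimes k} \otimes R) = 0$ for $i \ge 1$.

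Next I would argue that $\cA^{\otimes k} \otimes R$ admits a \emph{good filtration} as a $\G$-module, meaning a filtration whose successive quotients are dual Weyl modules $\nabla(\lambda)$. The adjoint representation $\cA = \mathfrak{gl}_2$ fits into the short exact sequence $0 \to \nabla(0) \to \cA \to \nabla((1,-1)) \to 0$ provided by the trace map, so $\cA$ itself has a good filtration. By Donkin's theorem, valid integrally for $\GL_n$, tensor products and symmetric powers preserve good filtrations, so both $R = R_0 \otimes \bigotimes_i \Sym(\cA_i)$ and $\cA^{\otimes k} \otimes R$ inherit them. The conclusion then follows from Kempf vanishing: for dominant $\lambda$, a further application of the Grothendieck spectral sequence combined with the integral $R^j \Ind_\B^\G \chi_\lambda = 0$ (reducing to $H^j(\PP^1, \cO(n)) = 0$ for $n \ge -1$) yields $H^i(\G, \nabla(\lambda)) = H^i(\B, \chi_\lambda)$, and the latter vanishes for $i \ge 1$ via Hochschild--Serre for $\B = T \ltimes U$: the $T$-weights of $H^i(U, \Z)$ lie in the negative root direction, so tensoring with a dominant character gives a nontrivial $T$-character and kills the invariants.

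The main obstacle I anticipate is the integral form of Donkin's tensor product theorem. For the $\GL_2$ case at hand one can likely circumvent this by a direct computation using the explicit description of dual Weyl modules as $\Sym^n(\text{std}) \otimes \det^m$ and decomposing the relevant tensor products by hand. A completely different route, presumably the one taken in \cite{jjm}, would avoid $\G$-cohomology altogether and attack $\B$-cohomology directly via Hochschild--Serre: since $T$ is a torus and hence linearly reductive over $\Z$, one has $H^i(\B, V) = H^i(U, V)^T$, reducing the problem to $U$-acyclicity of $\cA^{\otimes k} \otimes R$. The relevant input here is that the variables $\mathbf{b}_i \in R$ are $U$-invariant while $u_y \cdot \mathbf{a}_i = \mathbf{a}_i + y \mathbf{b}_i$, so that $R$ behaves like a quasi-free module over the coordinate ring of $U$; from this one should be able to extract an explicit $U$-acyclic resolution of $\cA^{\otimes k} \otimes R$, or equivalently a contracting homotopy on the Hochschild complex, by treating $\mathbf{a}_i/\mathbf{b}_i$ as a formal translation parameter.
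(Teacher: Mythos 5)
The paper does not actually prove this theorem: it is stated as a black box, with a pointer to \cite{jjm}*{\S4.2}, so there is no in-text proof to compare against. Evaluating your argument on its own terms: the reduction to $\G$-cohomology via the tensor identity $R^j\Ind_\B^\G(\mathrm{res}\,V) \cong V \otimes H^j(\PP^1,\cO)$ is sound and is in fact the same mechanism that underlies the paper's Theorem~\ref{t:bg}, and the computation of $H^i(\B,\chi_\lambda)$ via Hochschild--Serre for $\B = T\ltimes U$ (weight bookkeeping on $H^*(U,\Z)$) is correct. So the overall strategy --- reduce to $\G$-cohomology, then exhibit a good filtration and invoke the acyclicity of $\nabla(\lambda)$ --- is a legitimate route to the statement.

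Two points need correction or more care, however. First, your short exact sequence for $\cA$ is mislabeled: the trace map $\mathfrak{gl}_2 \to \Z$ gives $0\to\mathfrak{sl}_2\to\cA\to\Z\to 0$, and over $\Z$ one has $\mathfrak{sl}_2 \cong \Gamma^2 V\otimes\det^{-1} = \Delta(1,-1)$, a \emph{Weyl} module, so that sequence is a Weyl filtration, not a good one. The good filtration $0\to\nabla(0)\to\cA\to\nabla(1,-1)\to 0$ comes from the \emph{scalars} $\Z\cdot\mathrm{id}\hookrightarrow\mathfrak{gl}_2$, whose cokernel is $\mathfrak{pgl}_2 \cong \mathfrak{sl}_2^* = \nabla(1,-1)$; these two sequences do not agree integrally because the composite $\Z\hookrightarrow\mathfrak{gl}_2\xrightarrow{\tr}\Z$ is multiplication by $2$. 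Second, ``symmetric powers preserve good filtrations'' is not a theorem of Donkin and is false in general in small characteristic; what saves you here is that $\Sym^n(\cA) = \Sym^n(V\otimes V^*)$ carries the integral Cauchy filtration with $\GL_2\times\GL_2$-subquotients $L_\lambda(V)\otimes L_\lambda(V^*)$, after which restriction to the diagonal and Donkin's \emph{tensor product} theorem (available integrally for $\GL_n$) give the good filtration. You should cite that chain rather than a nonexistent ``symmetric power'' theorem. With those two repairs the argument goes through; whether it matches the route in \cite{jjm}*{\S4.2} I cannot verify from the present text.
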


\subsection{Roadmap} \label{s:roadmap}

Let $\boldsymbol{e} = \det(\boldsymbol{D}') - \det(\boldsymbol{D})$ and denote by
$\overline{\boldsymbol{e}}$ its image in $R/J$. Our strategy to prove that $\overline{\boldsymbol{e}}$ lies in $\overline{A}$ is as follows.  We will first prove that the ideal $J$ is stable under the action of $\B$.
We will then show that 
\[ \overline{\boldsymbol{e}} \in H^0(\B, R/J). \]
The long exact sequence in rational cohomology associated to 
\[ 0 \longrightarrow J \longrightarrow R \longrightarrow R/J \longrightarrow 0 \]
 yields an exact sequence
\begin{equation} \label{e:les}
 \begin{tikzcd}
H^0(\B, R) = A \ar[r] & H^0(\B, R/J) \ar[r, "c_J"] & H^1(\B, J).  \end{tikzcd}
\end{equation} 
The equality on the left is Corollary~\ref{c:a}.  Let $\beta \in H^1(\B, J)$ denote the image of $\overline{\boldsymbol{e}}$ under the connecting homomorphism $c_J$.
In view of (\ref{e:les}), the desired result $\overline{\boldsymbol{e}} \in \overline{A}$ will follow if we can show that $\beta =0$.

For this, we will define a certain $\B$-stable subideal $J' \subset J$ and show that in fact 
\[ \overline{\boldsymbol{e}}\in H^0(\B, R/J'). \]
(This is a slight abuse of notation, as here $\overline{\boldsymbol{e}}$ denotes the reduction of $\boldsymbol{e}$ modulo $J'$.)

We let $\alpha = c_{J'}(\overline{\boldsymbol{e}}) \in H^1(\B, J')$ defined as above.  Then $\beta = \iota_*(\alpha)$ where $\iota \colon J' \rightarrow J$ is the inclusion and $\iota_*$ is the induced map on rational cohomology.  To conclude, we will prove that the map
\[ \iota_*\colon H^1(\B, J') \longrightarrow H^1(\B, J) \]
vanishes.  Therefore $\beta = 0$, and our result follows.  

\subsection{Invariance}

Let $J' \subset J$ denote the subideal generated by the ``$b$'' coefficients of the matrices in (\ref{e:b1})--(\ref{e:b2}).  To be precise, $J'$ is generated by:
\begin{itemize}
\item The elements $
\sum_{i=1}^r \boldsymbol{\epsilon}_i \boldsymbol{b}_i 
$
 for each row of $\epsilon$-type in $D$.
\item The elements 
$
\boldsymbol{a}_i\boldsymbol{b}_j 
+ \boldsymbol{b}_i \boldsymbol{d}_j - \sum_{k = 1}^r \boldsymbol{\delta}_{ijk} \boldsymbol{b}_k
$ for each row of $\delta$-type in $D$.
\end{itemize}

\begin{lemma} \label{l:stable}
The ideals $J'$ and $J$ are stable under the action of $\B$.  More precisely,  the $\Z$-module spanned by each set of 4 relations in (\ref{e:b1})--(\ref{e:b2}) is isomorphic as a $\B$-module to a copy of the adjoint representation $\cA$ (see Example~\ref{e:adjoint}).
\end{lemma}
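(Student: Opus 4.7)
The plan is to realize each packet of four generators of $J$ appearing in (\ref{e:b1})--(\ref{e:b2}) as a $\G$-submodule of $R$ isomorphic to the adjoint representation $\cA$, and then deduce $\B$-stability of both $J$ and $J'$ by a standard argument about ideals generated by stable subspaces.

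The starting point is that for any ring $S$ and $g \in \GL_2(S)$, the action of $g$ on $R \otimes S$ is by simultaneous conjugation on the matrices $\boldsymbol{\rho}_i$ while fixing all of $R_0$ (hence fixing the $\boldsymbol{\epsilon}_i$ and $\boldsymbol{\delta}_{ijk}$). So both $M^{(\epsilon)} := \sum_i \boldsymbol{\epsilon}_i \boldsymbol{\rho}_i$ and $M^{(ij)} := \boldsymbol{\rho}_i \boldsymbol{\rho}_j - \sum_k \boldsymbol{\delta}_{ijk} \boldsymbol{\rho}_k$ satisfy $g \cdot M = g^{-1} M g$. Writing such a matrix as $M = \mat{m_a}{m_b}{m_c}{m_d}$, the $\Z$-linear map $\cA \to R$ defined by $A \mapsto m_a,\ B \mapsto m_b,\ C \mapsto m_c,\ D \mapsto m_d$ is automatically $\G$-equivariant by the very definition of the adjoint representation in Example~\ref{e:adjoint}. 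For injectivity I would simply inspect monomials: in the $\epsilon$-type case the four entries have the form $\sum_i \boldsymbol{\epsilon}_i \boldsymbol{a}_i,\ \sum_i \boldsymbol{\epsilon}_i \boldsymbol{b}_i,\ \sum_i \boldsymbol{\epsilon}_i \boldsymbol{c}_i,\ \sum_i \boldsymbol{\epsilon}_i \boldsymbol{d}_i$, which involve disjoint monomial supports; in the $\delta$-type case the entries contain the disjoint pure-$\boldsymbol{\delta}$ terms $-\boldsymbol{\delta}_{ijk}\boldsymbol{a}_k,\ -\boldsymbol{\delta}_{ijk}\boldsymbol{b}_k$, etc., which again forces $\Z$-linear independence. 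This gives the claimed isomorphism with $\cA$ and proves the second assertion of the lemma.

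For the stability claim, I would use the general principle that if $R$ is a $\G$-algebra (the action being by ring automorphisms) and $V \subset R$ is a $\G$-stable $\Z$-submodule, then the ideal $RV$ is $\G$-stable, since $g \cdot (rv) = (g \cdot r)(g \cdot v) \in R \cdot V$. Applying this to the finite union of copies of $\cA$ identified above shows that $J$ is $\G$-stable, and in particular $\B$-stable. For $J'$, I invoke the explicit calculation in Example~\ref{e:adjoint} showing that $\Z B \subset \cA$ is a $\B$-submodule (via $g \cdot B = (z/x) B$ for $g = \mat{x}{0}{y}{z}$); transporting through the isomorphism of the preceding paragraph, each $m_b$ alone generates a $\B$-stable $\Z$-submodule, and the same ideal-generation argument yields $\B$-stability of $J'$. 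I do not anticipate any real obstacle: the content of the lemma is essentially the conjugation symmetry of the defining relations, together with the single $\B$-submodule observation already recorded in Example~\ref{e:adjoint}; the only step requiring care is the linear-independence check underlying the isomorphism, and even that is transparent once the monomial structure of the entries is written out.
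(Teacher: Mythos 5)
Your proposal is correct and follows essentially the same route as the paper's proof. The paper observes that the defining relations are noncommutative polynomial expressions in the $\boldsymbol{\rho}_i$ with coefficients in $R_0$, on which $\B$ acts trivially, so each packet of four entries transforms under simultaneous conjugation as the adjoint representation; you identify the same mechanism by writing $g\cdot M = g^{-1}Mg$ for each generating matrix $M$ and noting that $A\mapsto m_a,\ldots,D\mapsto m_d$ is then $\G$-equivariant by construction. The one place you add content beyond the paper's terse phrasing is the explicit injectivity check via disjoint monomial supports, which justifies the word \emph{isomorphic} (rather than merely \emph{a quotient of}) in the lemma; the paper leaves this implicit. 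Your reduction of the stability of $J$ and $J'$ to the ideal-generated-by-a-stable-submodule principle and to the $\B$-stability of $\Z B\subset\cA$ matches the paper's argument exactly.
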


\begin{proof} The relations defining $J$ are linear combinations of products of the $\boldsymbol{\rho}_i$, with coefficients in $R_0$ (on which $\B$ acts trivially), and the definition of our action is by simultaneous conjugation on  $\boldsymbol{\rho}_i$. The second sentence of the Lemma follows.
The stability of $J'$ under $\B$ follows since the module of upper right entries $\Z B \subset \cA$ is stable under $\B$  (see Example~\ref{e:adjoint}).
\end{proof}

The goal of the rest of this subsection is to prove the following.

\begin{lemma} We have $\overline{\boldsymbol{e}} \in H^0(\B, R/J')$. \label{l:ebar}
\end{lemma}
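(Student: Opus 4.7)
The plan is to split $\B = TU$ (torus times lower unipotent radical) and verify $\B$-invariance of $\overline{\boldsymbol{e}}$ factor by factor. First I would observe that for $T$-invariance the statement is automatic: every entry of $\boldsymbol{D}'$ lies in $R_0$ or is of the form $\boldsymbol{\delta}_{\bullet\bullet\bullet}-\boldsymbol{a}_i$ or $\boldsymbol{\delta}_{\bullet\bullet\bullet}-\boldsymbol{d}_j$, and the torus $T$ fixes $\boldsymbol{a}_i$, $\boldsymbol{d}_j$, and everything in $R_0$ (by the formulas of Example~\ref{e:adjoint}). Thus $\det(\boldsymbol{D}')$ will be $T$-invariant in $R$ on the nose, so the content of the lemma reduces to $U$-invariance modulo $J'$.

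Next, parametrizing $U$ by $g_y = \mat{1}{0}{y}{1}$ and using Example~\ref{e:adjoint} ($g_y\cdot\boldsymbol{a}_i = \boldsymbol{a}_i + y\boldsymbol{b}_i$ and $g_y\cdot\boldsymbol{d}_j = \boldsymbol{d}_j - y\boldsymbol{b}_j$, while $\boldsymbol{b}_k$, $\boldsymbol{\delta}_{ijk}$, $\boldsymbol{\epsilon}_i$ are $U$-fixed), I would compute $g_y\cdot\boldsymbol{D}' = \boldsymbol{D}' + yN$, where $N$ has every $\epsilon$-row zero and $\delta$-row indexed by $(i,j)$ equal to $\boldsymbol{b}_j e_i - \boldsymbol{b}_i e_j$. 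Multilinearity of the determinant then expands
\[ \det(\boldsymbol{D}' + yN) - \det(\boldsymbol{D}') = \sum_{k=1}^r y^k T_k, \qquad T_k = \sum_{|S|=k}\det\bigl(\boldsymbol{D}'|_{S\to N}\bigr), \]
and since $y$ is a free variable the lemma will follow once I show $T_k \in J'$ for every $k \geq 1$. I will aim for the stronger claim that $\det(\boldsymbol{D}'|_{S\to N}) \in J'$ for every nonempty $S$.

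The two structural facts driving this are: $Nw = 0$ identically (with $w = (\boldsymbol{b}_1,\dotsc,\boldsymbol{b}_r)^T$), since each $\delta$-row $\boldsymbol{b}_j e_i - \boldsymbol{b}_i e_j$ annihilates $w$ by commutativity; and $\boldsymbol{D}'w \in (J')^r$, which is precisely the defining property of the generators of $J'$ drawn from (\ref{e:b1})--(\ref{e:b2}). For the single-row case $S=\{l\}$, cofactor expansion along the replaced row produces $\boldsymbol{b}_{j_l}C_{l,i_l} - \boldsymbol{b}_{i_l}C_{l,j_l}$, where the $C_{l,k}$ are cofactors involving only the unaltered rows, and substituting the relations $\sum_k \boldsymbol{b}_k r_{m,k} \in J'$ for each remaining row $r_m$ into these cofactors collapses the expression to zero modulo $J'$; this is directly verifiable for $r=2$ and $r=3$. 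For larger $|S|$ the analogous expansion handles each replaced row in turn.

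The hard part will be packaging this substitution into a clean, uniform argument rather than an ad hoc case analysis. The natural framework, as hinted in the introduction, is the Koszul complex of the regular sequence $\boldsymbol{b}_1,\dotsc,\boldsymbol{b}_r$ in $R$. Each $\delta$-row of $N$ is literally the Koszul boundary $d_2(e_{j_l}\wedge e_{i_l})$ for the differential $d_2\colon \bigwedge^2 R^r \to R^r$, and exactness of the Koszul complex lets one pair these boundaries against the $J'$-relations $r_m \cdot w \in J'$ on the unaltered rows, forcing $\det(\boldsymbol{D}'|_{S\to N}) \in J'$ for every $S$. Once this Koszul packaging is set up, each $T_k$ manifestly lies in $J'$, and the lemma follows.
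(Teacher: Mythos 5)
Your proposal tracks the paper's own argument for this lemma in all essentials: the same split of $\B$ into torus (which fixes $R_0[\boldsymbol{a}_i, \boldsymbol{d}_i]$ and hence $\det(\boldsymbol{D}')$) and lower unipotent, the same multilinear expansion of $\det(\boldsymbol{D}' + xN) - \det(\boldsymbol{D}')$ into determinants of row-replaced matrices, and the same two structural facts that drive the computation --- every row of $N$ annihilates $w = (\boldsymbol{b}_1,\dots,\boldsymbol{b}_r)^T$, while $\boldsymbol{D}'w$ has all entries in $J'$ (these are exactly the generators of $J'$). Your $r=2,3$ cofactor-and-substitute check is the correct local computation. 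The one genuine gap is the uniform argument for general $r$ and an arbitrary nonempty replacement set $S$, which you acknowledge but propose to close via ``exactness of the Koszul complex of $\boldsymbol{b}_1,\dots,\boldsymbol{b}_r$.'' That is not the right tool, and it is not what the paper does: no regularity or exactness of the $\boldsymbol{b}_k$'s is needed here, and the passage from $N_l \in \ker(d_1)$ and $\boldsymbol{D}'w \equiv 0 \pmod{J'}$ to $\det \in J'$ does not follow from a Koszul-exactness formalism in any obvious way.

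What the paper actually does is a short sequence of determinant-preserving column operations, which is precisely the uniform version of your substitution. Pick one replaced $\delta$-row with pair $(i,j)$, $i \neq j$ (if $i=j$ that row is zero and the determinant vanishes; if every replaced row is of $\epsilon$-type the replaced rows are zero and the determinant again vanishes). Rescale that row to $x(e_i - e_j)$ while simultaneously multiplying the $i$th and $j$th entries of every other row by $\boldsymbol{b}_i$ and $\boldsymbol{b}_j$ (this keeps the determinant fixed), then add column $j$ and $\boldsymbol{b}_k\cdot(\text{column }k)$ for each $k\neq i,j$ into column $i$. In the resulting matrix, the $i$th column reads: $0$ in the chosen replaced row; $(\boldsymbol{D}'_m)\cdot w \in J'$ in each unreplaced row; and $\boldsymbol{b}_q\boldsymbol{b}_p - \boldsymbol{b}_p\boldsymbol{b}_q = 0$ in any other replaced $\delta$-row with pair $(p,q)$. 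An entire column lies in $J'$, so the determinant does too, for every nonempty $S$ at once. (A Koszul complex \emph{does} appear in the paper, but later and for a different sequence --- the generators $B_1,\dots,B_r$ of $J'$ --- and for a different purpose, namely showing the connecting class in $H^1(\B, J')$ dies in $H^1(\B,J)$.)
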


\begin{proof}
 The matrix $\boldsymbol{D}$ has coefficients in $R_0$, on which $\B$ acts trivially, so we must show that 
\[ \overline{\det(\boldsymbol{D}')} \in H^0(\B, R/J'). \]
The group $\B$ is generated by elements of the form  $\sigma_{x,y} = \mat{x}{0}{0}{y}$ and $\tau_x = \mat{1}{0}{x}{1}$.  The matrix $\boldsymbol{D}'$ has coefficients in $R_0[\boldsymbol{a}_i, \boldsymbol{d}_i]$ on which  $\sigma_{x,y}$ acts trivially.  Therefore we must only consider the action of $\tau_x$.

The difference $\det(\tau_x(\boldsymbol{D}')) - \det(\boldsymbol{D}')$ is a linear combination (with coefficients $\pm 1$) 
of determinants of all matrices $M$ obtained by starting with $\boldsymbol{D}'$ and replacing some nonempty subset of the rows $w$ with $\tau_x(w) - w$.
We will show $\det(M) \in J'$ for each such matrix $M$.

Rows of $\epsilon$-type in $\boldsymbol{D}'$ contain only the elements $\boldsymbol{\epsilon}_i \in R_0$ that are fixed by the action of $\B$.  
So any matrix $M$ that contains a row $\tau_x(w) - w = 0$ for a row $w$ of $\epsilon$-type will have determinant 0.
Therefore we need only consider matrices $M$ that contain a row  $\tau_x(w) - w$ for a row $w$ of $\delta$-type.
Conjugation by $\tau_x$ sends $\boldsymbol{a}_i \mapsto \boldsymbol{a}_i + \boldsymbol{b}_i x$ and $\boldsymbol{d}_i \mapsto \boldsymbol{d}_i - \boldsymbol{b}_i x$.  Say $w$  is associated to a pair $(i,j)$.  Then 
\[ \tau_x(w) - w = (0, 0, \dots, \boldsymbol{b}_j x, 0, \dots, -\boldsymbol{b}_i x, 0, \dots, 0), \]
where there is $\boldsymbol{b}_j x$ in the $i$th slot and $ -\boldsymbol{b}_i x$ in the 
$j$th slot.
Note that if $i = j$ then $\tau_x(w) - w = 0$ and hence $\det(M)=0$.  So we assume $i \neq j$.
We make the following alterations to $M$ which do not change the determinant:
\begin{itemize}
\item We replace $\tau_x(w) - w$ by 
\[ (0, 0, \dots,  x, 0, \dots, -x, 0, \dots, 0). \]
At the same time, in every row other than $w$ we multiply the $j$th coordinate by $\boldsymbol{b}_j$ and the $i$th coordinate by $\boldsymbol{b}_i$. 
\item We then add the new  $j$th column to the new $i$th column.
\item For each $1 \le k \le r, k \neq i, j$, we add $\boldsymbol{b}_k$ times the $k$th column to the 
new $i$th column.
\end{itemize}
In the matrix $M$ that results from these changes, the $i$th coordinate is precisely the generator of $J'$ associated to the row.
Therefore $M$ has an entire column with elements lying in $J'$, so its determinant lies in $J'$ as well.
\end{proof}

\subsection{Kozsul complex}

For simplicity, denote the $r$ elements of $J'$ corresponding to the rows of $\boldsymbol{D}'$ by $B_1, \dots, B_r$, i.e.
\[ \boldsymbol{D}' \begin{pmatrix} \boldsymbol{b}_1 \\ \vdots \\ \boldsymbol{b}_r \end{pmatrix}  = 
\begin{pmatrix} B_1 \\ \vdots \\ B_r \end{pmatrix}. \]

  The rank 1 modules $\Z B_i $ are rational $\B$-representations isomorphic to $\Z B \subset \cA$ as in Example~\ref{e:adjoint}.  Let $V = \oplus_{i=1}^r \Z B_i$.  Consider the following complex of rational $\B$-representations over $R$:
\begin{equation} \label{e:kozsul} \begin{tikzcd}
 0 \ar[r] & \left(\bigwedge\nolimits^r V\right) \otimes R \ar[r,"f_r"] 
 & \left(\bigwedge\nolimits^{r-1} V \right) \otimes R   \ar[r, "f_{r-1}"]
 &  \cdots  \left(\bigwedge\nolimits^2 V\right) \otimes R \ar[r,"f_2"] & V \otimes R \ar[r,"f_1"] &  R. 
 \end{tikzcd}
 \end{equation}
 Here all wedge products and tensor products are over $\Z$.
The maps  \[ f_i\colon \left(\bigwedge\nolimits^i V\right) \otimes R \rightarrow \left(\bigwedge\nolimits^{i-1} V\right) \otimes R \]are given  by 
\[ B_{k_1} \wedge B_{k_2} \wedge \cdots \wedge B_{k_i} \otimes r \mapsto
\sum_{j=1}^i (-1)^j B_{k_1} \wedge B_{k_2} \wedge \cdots \wedge \hat{B}_{k_j} \wedge \cdots B_{k_i} \otimes B_{k_j}r. \]
Noting that each term  $\left(\bigwedge\nolimits^r V\right) \otimes R$ in (\ref{e:kozsul}) is isomorphic to 
$\bigwedge \nolimits^r_R \left(V \otimes R\right)$, the sequence (\ref{e:kozsul}) is precisely the Kozsul complex for the free $R$-module $V \otimes R$.  It is therefore exact if we can prove that the elements $B_1, \dots, B_r$ form a regular sequence in $R$.

\begin{lemma} The elements $B_1, \dots, B_r$ form a regular sequence in $R$. 
\end{lemma}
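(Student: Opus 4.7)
The plan is to reduce the claim to a height computation and then invoke Cohen-Macaulayness of $R$. Since $R$ is a polynomial ring over $\Z$, it is Cohen-Macaulay, and so an ideal generated by $r$ elements is generated by a regular sequence if and only if it has height equal to $r$. It therefore suffices to show that $(B_1, \dots, B_r)$ has height $r$ in $R$.

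The key first step is a change of variables in $R$ to make $\boldsymbol{D}'$ a matrix of free indeterminates. For each $\delta$-type row indexed by $(i,j)$, I would introduce new indeterminates $\tilde{\boldsymbol{\delta}}_{iji} := \boldsymbol{\delta}_{iji} - \boldsymbol{d}_j$ and $\tilde{\boldsymbol{\delta}}_{ijj} := \boldsymbol{\delta}_{ijj} - \boldsymbol{a}_i$ (combining the two modifications in the diagonal case $i = j$). This is an invertible linear substitution among the generators of the polynomial ring $R$, so $R$ is still a polynomial ring on the same number of algebraically independent indeterminates. In terms of the new generators, every entry of $\boldsymbol{D}'$ is a single variable, and the $r^2$ entries of $\boldsymbol{D}'$ correspond to $r^2$ distinct algebraically independent variables of $R$.

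Next, I would show that the vanishing locus $V = V(B_1, \dots, B_r) \subseteq \Spec R$ has pure codimension $r$. Decompose $V = V_1 \cup V_2$, where $V_1 = V(\boldsymbol{b}_1, \dots, \boldsymbol{b}_r)$ is contained in $V$ because each $B_i$ is $R$-linear in the $\boldsymbol{b}_j$, and $V_2$ is the closure of $V \setminus V_1$. The subvariety $V_1$ is cut out by $r$ algebraically independent variables, so has codimension $r$. For $V_2$, observe that at any point where some $\boldsymbol{b}_j \neq 0$, the $r$ linear equations $\sum_j \boldsymbol{D}'_{ij} \boldsymbol{b}_j = 0$ involve pairwise disjoint subsets of the $r^2$ free variables of $\boldsymbol{D}'$ (one row per equation) and each is nontrivial, so together they cut out codimension $r$. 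Therefore $V_2$ also has codimension $r$, $V$ has pure codimension $r$, and $(B_1, \dots, B_r)$ has height $r$ as desired.

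The main obstacle I anticipate is correctly bookkeeping the change of variables — in particular, confirming that after the substitutions, the $r^2$ entries of $\boldsymbol{D}'$ correspond to $r^2$ distinct algebraically independent generators of $R$. This hinges on the convention that the $\boldsymbol{\delta}_{ijk}$ for different rows $(i,j)$ are distinct free variables of $R_0$, and that the modifications $-\boldsymbol{a}_i$, $-\boldsymbol{d}_j$ affect only two entries per $\delta$-type row. Once this setup is rigorous, the codimension argument is short and the Cohen-Macaulay conclusion is standard.
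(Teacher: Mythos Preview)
Your proposal is correct and follows essentially the same route as the paper: both perform the change of variables that turns the entries of $\boldsymbol{D}'$ into $r^2$ algebraically independent indeterminates, after which the paper reduces to the general statement that the elements $L_i = \sum_j x_{ij} y_j$ form a regular sequence in $S[x_{ij}][y_1,\dots,y_r]$, which it leaves as an exercise. Your Cohen--Macaulay plus codimension argument is one standard way to carry out that exercise, and your bookkeeping concern (distinctness of the modified $\boldsymbol{\delta}$-variables, including the diagonal case $i=j$) is exactly the point one must check.
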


After enacting the change of variables for rows of $\delta$-type:
\[ \delta_{ijk}' = \begin{cases} \delta_{iji} - d_j & k=i \\
\delta_{ijj} - a_i & k=j \\
\delta_{ijk} & k \neq i,j,
\end{cases} \]
we see the lemma follows from the following.

\begin{lemma} \label{l:regular}
Let $S$ be a commutative ring and let $T = S[x_{ij}]_{i,j=1}^{r}$.  The elements $L_i = \sum_{j=1}^r x_{ij} y_j$ form a regular sequence in $T[y_1, \dots, y_r]$.  
\end{lemma}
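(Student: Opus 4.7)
I plan to prove the slightly stronger claim, by induction on $s$, that for any $s \leq r$ and any commutative ring $S$, the elements $L_1, \ldots, L_s$ form a regular sequence in $\tilde R := S[y_1, \ldots, y_r, x_{ij}: 1 \leq i \leq s,\, 1 \leq j \leq r]$, where $L_i = \sum_{j=1}^{r} x_{ij} y_j$; the lemma is the case $s = r$. The base case $s = 0$ is trivial. The key structural observation is that $L_i$ involves only the row-$i$ variables $x_{i\ast}$, so for $s \geq 1$ the elements $L_1, \ldots, L_{s-1}$ lie in the subring $R_0 := S[y_1, \ldots, y_r, x_{ij}: 1 \leq i \leq s-1,\, 1 \leq j \leq r]$, and $\tilde R = R_0[x_{s1}, \ldots, x_{sr}]$.

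By the inductive hypothesis, $L_1, \ldots, L_{s-1}$ is regular in $R_0$, hence also in the polynomial extension $\tilde R$. So I need only show that $L_s$ is a non-zero-divisor on $\tilde R / (L_1, \ldots, L_{s-1}) \cong (R_0/J)[x_{s1}, \ldots, x_{sr}]$, where $J := (L_1, \ldots, L_{s-1})$. Since $L_s = \sum_j y_j x_{sj}$ is linear in the new variables $x_{s\ast}$ with coefficients $y_j \in R_0/J$, a standard leading-coefficient argument in a polynomial ring reduces this to showing that some coefficient, say $y_1$, is a non-zero-divisor on $R_0/J$.

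To establish that $y_1$ is a non-zero-divisor on $R_0/J$, I would use a Koszul-lifting argument. Suppose $y_1 f = \sum_{i < s} g_i L_i$ in $R_0$; reducing modulo $y_1$ yields $\sum \bar g_i \bar L_i = 0$ in $\bar R_0 := R_0/(y_1)$, where $\bar L_i = \sum_{j \geq 2} y_j x_{ij}$. Since the variables $x_{i1}$ do not appear in any $\bar L_i$, I absorb them into the coefficient ring $S' := S[x_{i1}: 1 \leq i \leq s-1]$ and relabel $y_{j+1} \mapsto y'_j$ and $x_{i,j+1} \mapsto x'_{ij}$; then $\bar L_1, \ldots, \bar L_{s-1}$ realizes the claim with parameters $(s-1, r-1)$ over $S'$, and since $s - 1 \leq r - 1$ the inductive hypothesis applies, yielding that $\bar L_1, \ldots, \bar L_{s-1}$ is a regular sequence in $\bar R_0$. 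By exactness of the Koszul complex, the syzygy $\sum \bar g_i \bar L_i = 0$ arises from trivial Koszul syzygies: $\bar g_i = \sum_{q > i} \bar h_{iq} \bar L_q - \sum_{p < i} \bar h_{pi} \bar L_p$ for some $\bar h_{pq} \in \bar R_0$.

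Lifting each $\bar h_{pq}$ arbitrarily to $\tilde h_{pq} \in R_0$, one checks formally that $\sum_i \bigl(\sum_{q > i} \tilde h_{iq} L_q - \sum_{p < i} \tilde h_{pi} L_p\bigr) L_i = \sum_{p < q} \tilde h_{pq}(L_q L_p - L_p L_q) = 0$, using only the commutativity of $R_0$. Subtracting this lifted syzygy from $(g_i)$ therefore produces $g'_i \in R_0$ with $g'_i \in y_1 R_0$, say $g'_i = y_1 g''_i$, satisfying $y_1 f = y_1 \sum g''_i L_i$; whence $f = \sum g''_i L_i \in J$ because $y_1$ is a non-zero-divisor in the polynomial ring $R_0$. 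The main obstacle is precisely this Koszul-lifting step, which requires both the exactness of the Koszul complex for the reduced sequence $\bar L_1, \ldots, \bar L_{s-1}$ (ensured by the induction) and the elementary but crucial fact that lifted trivial Koszul syzygies contribute zero identically to $\sum g_i L_i$.
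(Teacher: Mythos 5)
The paper itself does not include a proof of Lemma~\ref{l:regular}; it refers the reader to \cite{jjm}*{Proposition 5.13}, so a direct comparison of approaches is not possible here. Your argument, however, is correct. The structure is sound: you run a double-variable induction on $s$ (with $r$ and $S$ allowed to vary, subject to $s \le r$), first using that regular sequences persist under polynomial base extension to handle $L_1, \dots, L_{s-1}$ inside $\tilde R = R_0[x_{s*}]$, then reducing the claim that $L_s$ is a non-zero-divisor on $(R_0/J)[x_{s*}]$ to the claim that the single coefficient $y_1$ is a non-zero-divisor on $R_0/J$ via the leading-coefficient-in-$x_{s1}$ argument. The heart of the matter is the Koszul-lifting step, and you handle it correctly: after reducing mod $y_1$, the variables $x_{i1}$ become inert and the system $\bar L_1, \dots, \bar L_{s-1}$ is literally another instance of the claim with parameters $(s-1, r-1)$ over the enlarged base ring $S' = S[x_{i1}]$, which is covered by the inductive hypothesis since $s-1 \le r-1$; the acyclicity of the Koszul complex for a regular sequence then identifies the syzygy as a combination of trivial ones, and the identity $\sum_{p<q}\tilde h_{pq}(L_q L_p - L_p L_q)=0$ lets you lift and subtract, leaving a factor of $y_1$ that cancels because $y_1$ is a genuine polynomial variable in $R_0$. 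Two small points worth spelling out in a final write-up: (i) the implicit fact that a non-zero-divisor $a \in B$ remains a non-zero-divisor in $B[x_1,\dots,x_m]$, used twice (once to base-extend the regular sequence, once in the leading-coefficient step); and (ii) that the ideal $(L_1,\dots,L_r)$ is proper, which is clear since every $L_i$ lies in the ideal generated by the variables --- some authors include this in the definition of regular sequence, and in any case it is needed for the Koszul complex to be a genuine resolution. Neither is a gap, merely standard facts you invoke silently.
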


We leave the proof of Lemma~\ref{l:regular} as an exercise, referring the reader to \cite{jjm}*{Proposition 5.13}.

\subsection{Embedding into an acyclic complex}

 \begin{theorem}\label{t:comm}
There is a commutative diagram of complexes of rational $\B$-modules
\begin{equation} \label{e:kozsul2} \begin{tikzcd}
0 \ar[r] \ar[d] &[-1em] \left(\bigwedge\nolimits^r V\right) \otimes R \ar[r,"f_r"] \ar[d,"\iota_r"]
 & \left(\bigwedge\nolimits^{r-1} V \right) \otimes R   \ar[r, "f_{r-1}"] \ar[d,"\iota_{r-1}"]
 &  \cdots  \left(\bigwedge\nolimits^2 V\right) \otimes R \ar[r,"f_2"] \ar[d, "\iota_2"] & V \otimes R \ar[r,"f_1"]  \ar[d,"\iota_1"] & J' \ar[d, "\iota_0"] \\
 0 \ar[r] & W_r \otimes R \ar[r,"g_r"] 
 & W_{r-2} \otimes R   \ar[r, "g_{r-1}"]
 &  \cdots  W_2 \otimes R \ar[r,"g_2"] & W_1 \otimes R \ar[r,"g_1"] &  J,
 \end{tikzcd}
  \end{equation}
where the $W_i \otimes R$ are acyclic $\B$-modules for $i \ge 1$.
 \end{theorem}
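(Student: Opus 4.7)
\textbf{Proof proposal for Theorem~\ref{t:comm}.}

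The plan is to build the bottom row from tensor powers of the adjoint representations that already carry the generators of $J$. By Lemma~\ref{l:stable}, for each $k \in \{1, \dots, r\}$ the four generators of $J$ coming from the $k$-th row of $\boldsymbol{D}$ span a $\B$-stable submodule $\cA_k \subset J$ isomorphic to the adjoint representation $\cA$ of Example~\ref{e:adjoint}. Set $\cA' := \bigoplus_{k=1}^{r} \cA_k$ and let $f\colon \cA' \to J$ be the induced $\B$-equivariant map. Define
\[ W_i := (\cA')^{\otimes i}, \qquad g_i(a_1 \otimes \cdots \otimes a_i \otimes r) := \sum_{j=1}^{i} (-1)^j\, a_1 \otimes \cdots \otimes \hat{a}_j \otimes \cdots \otimes a_i \otimes f(a_j)\,r, \]
with $g_1(a \otimes r) = f(a)\,r \in J$, and let $\iota_0\colon J' \hookrightarrow J$ be the natural inclusion. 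That the $g_i$ form a complex is an immediate computation using commutativity of $R$, and their $\B$-equivariance follows from that of $f$.

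For the vertical maps, $V = \bigoplus_k \Z B_k$ embeds into $\cA'$ by placing each $\Z B_k$ inside $\cA_k$ as the ``$B$-slot''; by Example~\ref{e:adjoint}, this is a $\B$-submodule on which $\B$ acts by the character $\mat{x}{0}{y}{z} \mapsto z/x$. For $k_1 < k_2 < \cdots < k_i$, I define
\[ \iota_i(B_{k_1} \wedge \cdots \wedge B_{k_i} \otimes r) := B_{k_1} \otimes \cdots \otimes B_{k_i} \otimes r \;\in\; \cA_{k_1} \otimes \cdots \otimes \cA_{k_i} \otimes R \;\subset\; W_i \otimes R, \]
and extend $\Z$-linearly via the antisymmetry relations of the wedge. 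Since $\iota_i$ lands in $\Z B_{k_1} \otimes \cdots \otimes \Z B_{k_i}$, on which $\B$ acts by $(z/x)^i$---exactly as on the sorted wedge $B_{k_1} \wedge \cdots \wedge B_{k_i}$---each $\iota_i$ is $\B$-equivariant. Commutativity $g_i \circ \iota_i = \iota_{i-1} \circ f_i$ of the squares is a direct sign match: both sides send a sorted wedge $B_{k_1} \wedge \cdots \wedge B_{k_i} \otimes r$ to
\[ \sum_{j=1}^{i} (-1)^j\, B_{k_1} \otimes \cdots \otimes \hat{B}_{k_j} \otimes \cdots \otimes B_{k_i} \otimes B_{k_j}\,r, \]
using $f(B_{k_j}) = B_{k_j}$ inside $J$.

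Acyclicity of $W_i \otimes R$ for $i \ge 1$ follows from the decomposition
\[ W_i \otimes R \;=\; \bigoplus_{(k_1, \dots, k_i) \in \{1, \dots, r\}^i} \cA_{k_1} \otimes_{\Z} \cdots \otimes_{\Z} \cA_{k_i} \otimes_{\Z} R. \]
Each summand is isomorphic as a rational $\B$-module to $\cA^{\otimes i} \otimes R$, which is acyclic by Theorem~\ref{t:adj}; since rational cohomology commutes with finite direct sums, $H^j(\B, W_i \otimes R) = 0$ for all $j \ge 1$.

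I anticipate no deep obstacle; the construction is essentially formal. The conceptual content is the observation that the ``$B$-only'' generators of $J'$ are the sorted-tensor shadows of the full adjoint-representation generators of $J$, so the wedge Kozsul complex on $V$ fits compatibly inside the tensor-power Kozsul-type complex on $\cA'$. The sole external input beyond sign bookkeeping is Theorem~\ref{t:adj}, applied via the direct-sum decomposition to transfer acyclicity from $\cA^{\otimes i}$ to $(\cA')^{\otimes i}$.
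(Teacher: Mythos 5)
Your construction is correct and follows essentially the same route as the paper's proof: place each block of four generators of $J$ into a copy $\cA_k$ of the adjoint (as in Lemma~\ref{l:stable}), form a tensor-power-style complex on these copies with the same signed-removal differential, embed the Koszul complex on $V$ by sending sorted wedges to sorted tensors, and invoke Theorem~\ref{t:adj} termwise for acyclicity. The only deviation is cosmetic: you take $W_i = (\cA')^{\otimes i}$, summing over all $r^i$ ordered $i$-tuples (with repetition), while the paper takes the smaller sum over $\binom{r}{i}$ size-$i$ subsets $\{k_1,\dots,k_i\}\subset\{1,\dots,r\}$; both choices contain the image of $\iota_i$, both are closed under $g_i$, and both decompose as direct sums of copies of $\cA^{\otimes i}\otimes R$, so the acyclicity argument applies identically.
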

 
 \begin{proof}
 We have already constructed the complex in the top row (see (\ref{e:kozsul})), noting that the image of $f_1$ is the ideal generated by the $B_i$, namely $J'$.
  Next we define the bottom row. Fix an index $i=1, \dots, r$, and suppose that the $i$th row of $D$ corresponds to a pair $(m,n)$, i.e.\ we have $B_i = B_{mn}$ where
   \[ \boldsymbol{\rho}_m \boldsymbol{\rho}_n - \sum_{k=1}^r \boldsymbol{\delta}_{mnk} \boldsymbol{\rho}_k = \mat{A_{mn}}{B_{mn}}{C_{mn}}{D_{mn}}.
  \]
We then let \[  \cA_i = \Z A_{mn} \oplus \Z B_{mn} \oplus \Z C_{mn} \oplus \Z D_{mn} \] denote the corresponding copy of the adjoint.  Define 
\[ W_i =  \bigoplus_{ \{k_1, k_2, \dots, k_i\} \subset \{1, \dotsc, r\} }\cA_{k_1} \otimes \cdots \otimes \cA_{k_i}. \]

The vertical maps $\iota_i$ are given by, for $k_1 < k_2 < \cdots < k_i$,
\[ B_{k_1} \wedge \cdots \wedge B_{k_i} \otimes r \mapsto B_{k_1} \otimes \cdots \otimes B_{k_i} \otimes r. \]
The maps $g_i$ are given by:
\[ X_1 \otimes X_2 \otimes \cdots \otimes X_i \otimes r \mapsto \sum_{j=1}^{i} 
(-1)^jX_1 \otimes X_2 \otimes \cdots  \hat{X}_j \cdots \otimes X_i \otimes X_j r. \]
The image of the map $g_1$ is precisely our ideal $J$.  The fact that the bottom row of (\ref{e:kozsul2}) is a complex, as well as the commutativity of the diagram, is clear. The $\B$-acyclicity of the $W_i \otimes R$ follows from Theorem~\ref{t:adj}.
 \end{proof}
 
 \subsection{A cascade of cohomology classes}
 
 \begin{theorem} \label{t:push}
 Let $\iota \colon J' \rightarrow J$ be the inclusion and let  
\[ \iota_*\colon H^1(\B, J') \longrightarrow H^1(\B, J) \]
be the induced map on first rational cohomology groups.  Then $\iota_* = 0$.
 \end{theorem}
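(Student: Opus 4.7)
The plan is to combine the exactness of the Koszul complex (top row of the diagram in Theorem~\ref{t:comm}) with the $\B$-acyclicity of the bottom-row terms $W_i \otimes R$ (Theorem~\ref{t:adj}) via a simultaneous dimension-shifting argument up both complexes. The intuition is that any class $\alpha \in H^1(\B, J')$ can be pushed up the Koszul resolution via connecting maps; by naturality, the corresponding class on the bottom will eventually factor through a cohomology group of $W_r \otimes R$, which is zero by acyclicity. Since at each step the bottom connecting map is an isomorphism (by acyclicity), one can then descend back down to conclude $\iota_* \alpha = 0$.

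First I would set up parallel short exact sequences on both rows. Exactness of the top yields, for each $i = 1, \ldots, r-1$, an SES
\[ 0 \to Z_i \to K_i \xrightarrow{f_i} Z_{i-1} \to 0, \]
with $Z_0 = J'$, $Z_i = \ker f_i = \mathrm{im}\, f_{i+1}$, and $Z_{r-1} \cong K_r$ via the injective $f_r$. On the bottom, set $J_i = \mathrm{im}(g_i) \subset W_{i-1} \otimes R$ (so $J_1 = J$), giving SESs
\[ 0 \to \ker g_i \to W_i \otimes R \xrightarrow{g_i} J_i \to 0. \]
Using the relation $\iota_{i-1} f_i = g_i \iota_i$ from Theorem~\ref{t:comm}, one checks that for any $z = f_{i+1}(k) \in Z_i$ we have $\iota_i(z) = g_{i+1}(\iota_{i+1}(k)) \in J_{i+1}$, so $\iota_i$ restricts to maps $\iota_i|_{Z_i} \colon Z_i \to J_{i+1} \subseteq \ker g_i$, yielding a compatible map of SESs.

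Next, iterate the top connecting maps: set $\alpha_0 = \alpha$ and $\alpha_i = \delta_i^K \alpha_{i-1} \in H^{i+1}(\B, Z_i)$ for $i = 1, \ldots, r-1$, and define the bottom companions $\tilde\beta_i = (\iota_i|_{Z_i})_* \alpha_i \in H^{i+1}(\B, J_{i+1})$, so $\tilde\beta_0 = \iota_* \alpha$. Naturality of the connecting homomorphism on the compatible SESs gives
\[ \delta_i^W(\tilde\beta_{i-1}) = \mathrm{inc}_*(\tilde\beta_i) \in H^{i+1}(\B, \ker g_i), \]
where $\mathrm{inc} \colon J_{i+1} \hookrightarrow \ker g_i$. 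Crucially, since $W_i \otimes R$ is $\B$-acyclic for $i \ge 1$, the map $\delta_i^W$ is an isomorphism, and hence injective, for each $i \ge 1$. The base case of the ladder is to show $\tilde\beta_{r-1} = 0$: under the identification $Z_{r-1} \cong K_r$ via $f_r$, the restriction $\iota_{r-1}|_{Z_{r-1}}$ equals $g_r \circ \iota_r \colon K_r \to J_r$, so $\tilde\beta_{r-1} = (g_r)_* (\iota_r)_* \alpha_{r-1}$ factors through $H^r(\B, W_r \otimes R)$, which is zero by Theorem~\ref{t:adj}. Descending inductively: if $\tilde\beta_i = 0$ then $\mathrm{inc}_*(\tilde\beta_i) = 0$, so $\delta_i^W(\tilde\beta_{i-1}) = 0$, and injectivity of $\delta_i^W$ yields $\tilde\beta_{i-1} = 0$. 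Starting at $i = r-1$ and descending, we reach $\tilde\beta_0 = \iota_* \alpha = 0$.

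The main technical obstacle is the subtle compatibility check that $\iota_i$ really restricts to land in $J_{i+1}$, not merely in the possibly larger kernel $\ker g_i$; without this refinement, the naturality square for the connecting map does not interlock with the next stage of dimension shifting. This compatibility is exactly what allows the exactness of the Koszul complex on top (giving $Z_i = \mathrm{im}\, f_{i+1}$, not just $\ker f_i$) to feed into the acyclic chain on the bottom, and it is the place where the explicit structure of the diagram built in Theorem~\ref{t:comm} is essential. All other steps are formal naturality of connecting homomorphisms and the vanishing $H^r(\B, W_r \otimes R) = 0$ supplied by Theorem~\ref{t:adj}.
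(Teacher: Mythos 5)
Your proposal is correct and reproduces the paper's own argument: it is the same dimension-shifting cascade up the exact Koszul complex, pushed via the $\iota_i$ into the acyclic bottom row, with the vanishing propagated downward by the exactness of $0=H^i(\B,W_i\otimes R)\to H^i(\B,\im g_i)\xrightarrow{\delta}H^{i+1}(\B,\ker g_i)$. The compatibility you flag as the ``main technical obstacle''---that $\iota_i$ carries $Z_i=\im f_{i+1}$ into $J_{i+1}=\im g_{i+1}$, not merely into $\ker g_i$---is exactly the point the paper uses (tersely) when it identifies the obstruction to lifting $\beta_i$ with the image of $\beta_{i+1}$, so you have made explicit the same observation rather than found a new one.
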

 
 \begin{proof} Let the notation be as in Theorem \ref{t:comm}.
Recall $\im(f_1) = J'$. Let  
 \[ \alpha_1  \in H^1(\B, J') = H^1(\B, \im(f_1)). \]
We need to show that $\iota_{0, *} \alpha_1 = 0$.
The long exact sequence in cohomology associated to 
\[ 0 \rightarrow \ker(f_1) \rightarrow V \otimes R \rightarrow \im(f_1) \rightarrow 0 \]
 yields a class $\alpha_2 \in H^2(\B, \ker(f_1))$ that represents the obstruction to lifting $\alpha_1$ to a class in $H^1(\B, V \otimes R)$.  Writing $\ker(f_1) = \im(f_2)$, we can view $\alpha_2 \in H^2(\B, \im(f_2))$ and repeat the process above, using the coboundary in the long exact sequence associated to 
\[ 0 \rightarrow \ker(f_2) \rightarrow \bigwedge\nolimits^2 V \otimes R \rightarrow \im(f_2) \rightarrow 0 \]
to obtain $\alpha_3 \in H^3(\B, \ker(f_2))$.  Continuing in this way we obtain 
\[ \alpha_i \in H^i(\B, \ker(f_{i-1})) = H^i(\B, \im(f_i)) \] for $i = 1, \dots, r+1$.  Note that $\alpha_{r+1} = 0$ since $\ker(f_r)=0$.

   For each $i = 1, \dotsc, r+1$,  define
\[ \beta_i = \iota_{i-1, *} \alpha_i \in H^i(\B, \im(g_{i})). \]  In particular, we have $\beta_1 = \iota_{0, *} \alpha_1$, which is the class we are trying to show vanishes.  The bottom row of (\ref{e:kozsul2}) is a complex but we do not claim it is exact.  Nevertheless the obstruction to $\beta_i \in H^i(\B, \im(g_{i}))$  lifting to a class in $H^i(\B, W_i \otimes R)$ is precisely the image of $\beta_{i+1} \in H^{i+1}(\B, \im(g_{i+1}))$ in $H^{i+1}(\B, \ker(g_i))$. Now $\beta_{r+1} = 0$ since $\alpha_{r+1}=0$, and hence we conclude that $\beta_r$ lifts to a class in $H^r(\B, W_r \otimes R)$.  However, by acyclicity we have $H^r(\B, W_r \otimes R) =0$ and hence $\beta_r=0$.  Therefore, $\beta_{r-1}$ lifts to a class in  $H^{r-1}(\B, W_{r-1} \otimes R)$; again this cohomology group vanishes so $\beta_{r-1} = 0$.  This downward cascading continues and we obtain $\beta_i = 0 $ for all $i$.  In particular $\beta_1 = 0$ as desired.
 \end{proof}

As explained in \S\ref{s:roadmap}, Theorem~\ref{t:push} implies that $\overline{\boldsymbol{e}} \in H^0(\B, R/J)$ lies in $\overline{A} \subset R/J$.  
From the discussion of \S\ref{s:subring}, it follows that $\det(D') - \det(D) = - \det(D) \in I$.  This completes the proof of Theorem~\ref{t:v3}.

\end{document}